\setlist[enumerate,itemize]{itemsep=3pt}
\newtheorem{thm}{Theorem}[section]
\newtheorem{cor}[thm]{Corollary}
\newtheorem{lem}[thm]{Lemma}
\newtheorem{prop}[thm]{Proposition}
\newtheorem{rem}[thm]{Remark}
\theoremstyle{remark}
\DeclareMathOperator{\des}{\mathrm{des}}
\DeclareMathOperator{\asc}{\mathrm{asc}}
\DeclareMathOperator{\cdes}{\mathrm{cdes}}
\DeclareMathOperator{\casc}{\mathrm{casc}}
\DeclareMathOperator{\D}{\mathrm{D}}
\newcommand\floor[1]{\lfloor#1\rfloor}
\newcommand{\B}{\mathscr{B}}
\renewcommand{\O}{\mathscr{O}}
\newcommand{\Eulerian}[2]{\genfrac{<}{>}{0pt}{1}{#1}{#2}}
\crefname{thm}{Theorem}{Theorems}
\numberwithin{equation}{section}
\numberwithin{figure}{section}
\numberwithin{table}{section}
\title{Refined Eulerian numbers and ballot permutations}
\author[Tongyuan Zhao]{Tongyuan Zhao$^1$}
\address{
$^2$College of Sciences,
China University of Petroleum,
102249 Beijing, P. R. China}
\email{zhaotongyuan@cup.edu.cn}
\author[Y. Sun]{Yue Sun$^2$}
\address{
$^1$College of Sciences,
China University of Petroleum,
102249 Beijing, P. R. China}
\email{sunyuecup@sina.com}
\author[Feng Zhao]{Feng Zhao$^3$}
\address{
$^2$College of Mathematics and Information Science,
Hebei Normal University,
050024 Shijiazhuang, P.R. China}
\email{zhaofeng@hebtu.edu.cn}
\thanks{Zhao is supported
by Science Foundation of China University of Petroleum, Beijing
(Grant No.\ 2462020YXZZ004).}
\keywords{ballot permutations, Eulerian numbers}
\subjclass[2020]{05A05 05A15 05A19 11B37}
\begin{document}
\maketitle
\begin{abstract}
A ballot permutation is a permutation $\pi$ such that in any prefix of $\pi$ the descent number is not more than the ascent number. In this article,
 we obtained a formula in close form for the multivariate generating function of $\{A(n,d,j)\}_{n,d,j}$, which denote the number of permutations of length $n$ with $d$ descents and $j$ as the first letter. Besides, by a series of calculations with generatingfunctionology, we confirm a recent conjecture of Wang and Zhang for ballot permutations.
\end{abstract}

\section{Introduction}
Let $\mathcal S_{n}$ be the symmetric group of all permutations of $[n]=\{1, 2, \dots , n\}$.
A position $i\in[n-1]$ in a permutation $\pi=\pi_1\pi_2\dotsm\pi_n\in\mathcal S_n$ is a \emph{descent} if $\pi_i>\pi_{i+1}$,
and an \emph{ascent} if $\pi_i<\pi_{i+1}$.
Denote the number of descents of $\pi$ by $\des(\pi)$,
and the number of ascents by $\asc(\pi)$.
We call the number $h(\pi)=\asc(\pi)-\des(\pi)$
the \emph{height} of $\pi$.
The permutation $\pi$ is said to be a \emph{ballot permutation}
if $h(\pi_1\pi_2\cdots\pi_i)\ge 0$
for all $i\in[n]$.
Let $\B_n$ denote the set of ballot permutations on $[n]$. Define $\B_{0}=\{\epsilon\}$,
where $\epsilon$ is the empty permutation.

The problem of enumerating ballot permutations is closely related with
that of enumerating ordinary permutations with
a given up–down signature, see \cite{And81,BFW07,Niv68,She12}.
A ballot permutation of length $2n+1$ with $n$ descents is said to be a \emph{Dyck permutation},
whose enumeration is the Eulerian-Catalan number, see Bidkhori and Sullivant~\cite{BS11}.
Let $\O_n$ be the set of \emph{odd order permutations}
of $[n]$, viz., the set of permutations of $[n]$ which are the products of cycles with odd lengths. Bernardi, Duplantier and Nadeau \cite{BDN10} proved
that the number of ballot permutations of length $n$
equals the number of odd order permutations of length $n$,
by using compositions of bijections, and thus \cref{thm:BDN} follows.
A short proof is given by Wang and Zhang~\cite{WZ20}.

\begin{thm}[Bernardi, Duplantier and Nadeau]\label{thm:BDN}
The number of ballot permutations of length $n$ is
\[
b_n=\begin{cases}
[(n-1)!!]^{2},&\text{if $n$ is even},\\
n!!(n-2)!!,&\text{if $n$ is odd},
\end{cases}
\]
where $(-1)!!=1$.
\end{thm}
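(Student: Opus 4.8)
The plan is to leverage the equality between ballot permutations and odd order permutations recorded in the excerpt: writing $o_n=|\O_n|$ for the number of odd order permutations of $[n]$, the cited result of Bernardi, Duplantier and Nadeau gives $b_n=o_n$, so it remains only to enumerate $\O_n$ in closed form. This is the natural point of entry, since $\O_n$ is defined purely by its cycle structure and is therefore transparent to the exponential formula, whereas the prefix condition defining $\B_n$ is not.

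First I would compute the exponential generating function of $\{o_n\}$. A permutation lies in $\O_n$ exactly when every one of its cycles has odd length, so the exponential formula yields
\[
O(x):=\sum_{n\ge 0}o_n\frac{x^n}{n!}=\exp\Bigl(\sum_{\substack{k\ge 1\\ k\ \mathrm{odd}}}\frac{x^k}{k}\Bigr)=\exp\Bigl(\tfrac12\log\tfrac{1+x}{1-x}\Bigr)=\sqrt{\frac{1+x}{1-x}}.
\]
From the closed form one checks directly that $O$ satisfies the linear differential equation $(1-x^2)O'(x)=O(x)$; equivalently, squaring gives $(1-x)O(x)^2=1+x$, from which the same relation follows after one differentiation.

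Next I would convert this differential equation into a recurrence for $o_n$. Extracting the coefficient of $x^n/n!$ from $(1-x^2)O'=O$ gives
\[
o_{n+1}=o_n+n(n-1)\,o_{n-1},\qquad o_0=o_1=1.
\]
Finally, I would verify by induction on $n$, splitting into the two parities, that the claimed expressions satisfy this recurrence: assuming $o_n=[(n-1)!!]^2$ and $o_{n-1}=(n-1)!!(n-3)!!$ for even $n$, the identity $(n-1)!!=(n-1)(n-3)!!$ collapses the right-hand side to $(n+1)[(n-1)!!]^2=(n+1)!!(n-1)!!$, matching the odd-index formula; the complementary case is symmetric. The only points requiring care are the double-factorial bookkeeping and the boundary convention $(-1)!!=1$, which pins down the base cases $o_0=o_1=1$.

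The main obstacle is not in any of these steps, all of which are routine generatingfunctionology, but is concentrated entirely in the input I am borrowing: the identity $b_n=o_n$ itself. That equality has no evident cycle-structure meaning on the ballot side, and its proof---whether through the compositions of bijections of Bernardi, Duplantier and Nadeau or the shorter argument of Wang and Zhang---is where the genuine combinatorial work lies.
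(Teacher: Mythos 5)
Your proposal is correct and takes essentially the same route as the paper, which itself offers no independent proof of \cref{thm:BDN} but derives it exactly as you do: by citing the Bernardi--Duplantier--Nadeau equality $b_n=o_n$ and letting the enumeration of odd order permutations supply the double-factorial formula. Your computation of $O(x)=\sqrt{(1+x)/(1-x)}$ via the exponential formula, the resulting recurrence $o_{n+1}=o_n+n(n-1)o_{n-1}$, and the parity-split induction are all correct and simply make explicit the counting step the paper leaves implicit.
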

In order to give a refinement for \cref{thm:BDN}, Spiro~\cite{Spi20} introduced a statistic $M(\pi)$ which is defined for a permutation $\pi$ such that
\[
M(\pi)=\sum_{c}\min\brk1{\cdes(c),\,\casc(c)},
\]
where the sum runs over all cycles $c=(c_1c_2\dotsm c_k)$ of $\pi$, with the \emph{cyclic descent}
\[
\cdes(c)=\abs{\{i\in[k]\colon c_i>c_{i+1}\ \text{where $c_{k+1}=c_1$}\}},
\]
and the \emph{cyclic ascent}
\[
\casc(c)=\abs{\{i\in[k]\colon c_i<c_{i+1}\ \text{where $c_{k+1}=c_1$}\}}
=\abs{c}-\cdes(c),
\]
where $\abs{c}$ is the length of $c$. Spiro conjectured that
the number of ballot permutations of length $n$ with $d$ descents
equals the number of odd order permutations $\pi$ of length $n$
such that $M(\pi)=d$, which is confirmed by Wang and the first author~\cite{WZ20a},
and thus \cref{thm:Spiro} follows.

\begin{thm}\label{thm:Spiro}
Let $n\ge 1$ and $0\le d\le \floor{(n-1)/2}$.
The number of ballot permutations of length $n$
with $d$ descents equals the number of odd order permutations $\pi$
of length $n$ with $M(\pi)=d$.
\end{thm}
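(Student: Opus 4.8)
The plan is to prove the identity at the level of generating functions, by showing that the two sides have the same bivariate exponential generating function in a size variable $x$ and a statistic variable $t$. Concretely, I would set
\[
G(x,t)=\sum_{n\ge 0}\frac{x^n}{n!}\sum_{\pi\in\O_n}t^{M(\pi)},
\qquad
F(x,t)=\sum_{n\ge 0}\frac{x^n}{n!}\sum_{\pi\in\B_n}t^{\des(\pi)},
\]
and reduce the theorem to the single identity $F(x,t)=G(x,t)$, since equality of these series coefficient-by-coefficient (in $x^n/n!$ and in $t^d$) is exactly the claimed refinement. The advantage of this formulation is that the two sides are attacked by completely different machinery while the statistic variable $t$ is carried along throughout, and the special case $t=1$ must reproduce \cref{thm:BDN} as a consistency check.

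The odd-order side is the more structured one, because $M$ is additive over cycles and hence the exponential formula applies. The key input is that a cycle of odd length $\ell$ contributes a factor governed by its cyclic descents: the number of cyclic orderings of an $\ell$-set with exactly $j$ cyclic descents is the Eulerian number $\Eulerian{\ell-1}{j-1}$, and such a cycle carries weight $t^{\min(j,\ell-j)}$. This gives
\[
G(x,t)=\exp\!\left(x+\sum_{\substack{\ell\ge 3\\ \ell\ \text{odd}}}\frac{x^\ell}{\ell!}\sum_{j=1}^{\ell-1}\Eulerian{\ell-1}{j-1}\,t^{\min(j,\ell-j)}\right),
\]
which at $t=1$ collapses (via $\sum_i\Eulerian{\ell-1}{i}=(\ell-1)!$) to $\exp\!\bigl(\sum_{\ell\ \mathrm{odd}}x^\ell/\ell\bigr)=\sqrt{(1+x)/(1-x)}$, recovering \cref{thm:BDN}. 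The remaining work here is to put the inner sum in closed form; because of the $\min(j,\ell-j)$ one cannot simply substitute $t$ into the classical Eulerian exponential generating function, so I would split the inner sum at the midpoint $j=(\ell-1)/2$, use the Eulerian symmetry $\Eulerian{\ell-1}{j-1}=\Eulerian{\ell-1}{\ell-j-1}$ to fold the two halves together, and then assemble a closed form from the standard identity for $\sum_{m,i}\Eulerian{m}{i}u^i x^m/m!$. This is a genuine but routine piece of generatingfunctionology.

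The ballot side is where I expect the real difficulty, and where the refined Eulerian numbers $A(n,d,j)$ enter. The ballot condition $h(\pi_1\cdots\pi_i)\ge 0$ is a constraint on every prefix, so $\des$ on $\B_n$ does not factor the way $M$ does on $\O_n$. My plan is to encode the height sequence of a permutation as a $\pm1$ lattice path (an up-step for each ascent, a down-step for each descent), under which the ballot condition becomes nonnegativity of the path and $\des$ becomes the number of down-steps, and then to extract the prefix-nonnegativity constraint by a cycle-lemma/reflection argument. This is exactly the sort of argument for which the first-letter refinement is needed: the first letter $j$ controls the initial height increment, so tracking $j$ is what allows the constrained (ballot) count to be recovered from the unconstrained refined count encoded by the closed-form generating function of $\{A(n,d,j)\}$. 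Carrying the descent variable $t$ through this reflection bookkeeping, so that descents at the cut points are accounted for correctly, is the main obstacle, and is the step that forces the introduction of $A(n,d,j)$ rather than a coarser statistic.

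Once both sides are in closed form, the final step is to verify algebraically that the two expressions coincide; I would expect this to reduce to an identity between $t$-deformations of $\sqrt{(1+x)/(1-x)}$, checkable either directly or by confirming that both series satisfy the same first-order differential equation in $x$ (the undeformed function satisfies $(1-x^2)y'=y$). The hardest part of the whole argument is the ballot side: translating the global prefix condition into a generating-function statement while keeping exact control of the descent statistic, which is precisely what the passage from ordinary Eulerian numbers to the refinement $\{A(n,d,j)\}$ is designed to accomplish.
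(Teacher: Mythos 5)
Your framework---reducing the theorem to an identity $F(x,t)=G(x,t)$ of bivariate exponential generating functions---is exactly the strategy the paper relies on, and your odd-order half is correct and coincides with the computation in Section 3.3: writing a cycle starting from its largest letter shows that an $\ell$-cycle with $j$ cyclic descents corresponds to a permutation of $\ell-1$ letters with $j-1$ descents, so there are $\Eulerian{\ell-1}{j-1}$ of them; folding $j$ with $\ell-j$ via the symmetry $\Eulerian{\ell-1}{j-1}=\Eulerian{\ell-1}{\ell-j-1}$ turns your inner sum into $\sum_{d\ge 1}2A(\ell-1,d-1)t^{d}$, which is precisely the paper's $l(n,d)=2A(n-1,d-1)$. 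Note, though, that after this folding your exponent is \emph{literally} the exponent appearing in \cref{Wang and Zhao}, so your final step (``verify algebraically that the two closed forms coincide,'' e.g.\ by a differential equation) is vacuous: the entire content of the theorem collapses to the single statement
\[
F(x,t)=\exp\Bigl(x+2\sum_{k\ge1}\sum_{d\le k-1}A(2k,d)\,t^{d+1}\frac{x^{2k+1}}{(2k+1)!}\Bigr),
\]
which is \cref{Wang and Zhao} itself, imported by the paper from \cite{WZ20a}.

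That ballot-side identity is where your sketch has a genuine gap. Reflection and the cycle lemma operate on $\pm1$ signatures, but the map from permutations to height paths is not weight-preserving: already for $n=3$ the signature $(+,-)$ is realized by two permutations while $(+,+)$ is realized by one, so a path bijection cannot be lifted to permutations; and tracking the first letter $j$ does not repair this, since ballotness constrains \emph{every} prefix height, which $j$ does not control---indeed, in the paper the refinement $A(n,d,j)$ is introduced for \cref{thm:WZ}, not for \cref{thm:Spiro}. The cycle lemma does lift in the tight case $d=(n-1)/2$ (final height $1$, exactly one strictly positive cut; rotations preserve cyclic descents---this is Bidkhori--Sullivant's Eulerian--Catalan count), but for $d<\floor{(n-1)/2}$ the number of ballot cuts of a cyclic word is not uniform: the cycle $(123)$ has three cuts, of which $123$ and $231$ are ballot and $312$ is not, and the two ballot cuts even have different descent numbers, so there is no constant factor to divide by. What actually works, and what underlies \cref{Wang and Zhao}, is the lowest-point decomposition used in Sections 3--4 of the paper: splitting an arbitrary permutation at its minimal (resp.\ maximal) lowest point into a reversed ballot prefix and a ballot suffix yields the functional equation $B(t,x)B(\frac{1}{t},tx)=1+(1+t)A(t,x)$ (quoted as \cite{WZ20a}*{Theorem 3.7} in Section 4); since every monomial $t^{d}x^{n}$ of $\log B(t,x)$ satisfies $d\le (n-1)/2$ (ballot supports are stable under the products arising in the logarithm) while every monomial of $\log B(\frac{1}{t},tx)$ satisfies $d\ge (n+1)/2$, this equation determines $B(t,x)$ uniquely by exactly the $\D^{t,x}$-type splitting used in \cref{utxy3}. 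Replacing your reflection/cycle-lemma step with this decomposition-plus-extraction argument would close the gap.
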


\cref{thm:Spiro} is proved by computing their bivariate generating functions in terms of the \emph{Eulerian number}~\cite{WZ20}.
For $n\ge1$ and $0\le d\le n-1$,
the Eulerian number, denoted as $A(n,d)$ or $\Eulerian{n}{d}$,
is the number of
permutations of $[n]$ with $d$ descents, see OEIS~\cite[A008292]{OEIS}.
We adopt the convention $A(0,0)=1$ and
\[
A(n,d)=0,\qquad\text{if $n<0$, or $d<0$, or $d=n\ge 1$, or $d>n$}.
\]
The $n$th \emph{Eulerian polynomial} is
\[
A_n(t)=\sum_{\pi\in\mathcal S_n}t^{\des(\pi)}
=\sum_{d}A(n,d)t^{d}\quad\text{for $n\ge 1$},
\]
where the notation $\sum_{i}$
implies that the index $i$ runs over all nonnegative integers
making the summation meaningful, and $A_0(t)=1$,
see Kyle Petersen~\cite[\S 1.4]{Kyle15B}.
The exponential generating function of the Eulerian polynomials is
\begin{equation}\label{gf:A}
\sum_{n}A_{n}(t)\frac{x^{n}}{n!}
=\frac{t-1}{t-e^{(t-1)x}},
\end{equation}
see \cite[Theorem 1.6]{Kyle15B} and \cite[Formula (75)]{FS09B}. Then the bivariate generating function
\begin{equation}\label{Eulequ}
A(t,x)
=\sum_{n\ge1}\sum_{d}\frac{A(n,d)t^{d}x^{n}}{n!}
=\sum_{n\ge1}A_n(t)\frac{x^{n}}{n!}\\
=\frac{t-1}{t-e^{(t-1)x}}-1
=\frac{e^{(1-t)x}-1}{1-t e^{(1-t)x}}.
\end{equation}
Denote by $b(n,d)$ the number of ballot permutations of length $n$ with $d$ descents,
and denote the bivariate generating function of $\{b(n,d)\}_{n,d}$ by
\[
B(t,x)=\sum_{n}\sum_{d\le n-1}
\frac{b(n,d)\,t^{d}x^{n}}{n!}.
\]
Wang and the first author~\cite{WZ20a} give the following Theorem by calculating the joint distribution of the peak and descent statistics over ballot permutations.
\begin{thm}[Wang and Zhao]\label{Wang and Zhao}
We have
\[
B(t,x)=\exp\brk3{x+2 \sum_{k\ge 1}\sum_{d\le k-1}
A(2k,d)t^{d+1}\frac{x^{2k+1}}{(2k+1)!}}.
\]
\end{thm}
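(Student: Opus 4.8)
The plan is to move from ballot permutations to odd order permutations, where the exponential structure is manifest, and then to read off the atoms. By \cref{thm:Spiro} we have $b(n,d)=\abs{\{\pi\in\O_n\colon M(\pi)=d\}}$ for all $n\ge1$, so, reading $\O_0=\{\epsilon\}$ as the empty set of cycles (which supplies the constant term $1$),
\[
B(t,x)=\sum_{n\ge0}\frac{x^{n}}{n!}\sum_{\pi\in\O_n}t^{M(\pi)}.
\]
Since an odd order permutation is exactly a set of odd-length cycles and $M(\pi)=\sum_{c}\min\brk1{\cdes(c),\casc(c)}$ is additive over these cycles, the weight $t^{M(\pi)}=\prod_c t^{\min(\cdes(c),\casc(c))}$ is multiplicative over components, and the exponential formula gives
\[
B(t,x)=\exp\brk3{\sum_{k\ge0}w_{2k+1}(t)\frac{x^{2k+1}}{(2k+1)!}},\qquad w_m(t)=\sum_{c}t^{\min(\cdes(c),\casc(c))},
\]
the inner sum ranging over the $(m-1)!$ cyclic permutations $c$ on a fixed $m$-set. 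It then remains only to evaluate $w_{2k+1}(t)$.

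The main step is a purely cyclic computation resting on two facts about a single cycle $c$ on $[m]$. First, the number of cycles with $\cdes(c)=j$ equals $A(m-1,j-1)$: I would prove this by rooting each cycle immediately after its largest entry $m$, writing $c=(m\,a_1a_2\dotsm a_{m-1})$ read cyclically; then the step $m\to a_1$ is always a cyclic descent while $a_{m-1}\to m$ is always a cyclic ascent, so $\cdes(c)=\des(a_1\dotsm a_{m-1})+1$, and as $a_1\dotsm a_{m-1}$ ranges over all linear permutations of $[m-1]$ the count follows. Second, complementation $c_i\mapsto(m+1)-c_i$ is an involution on the cycles of $[m]$ that interchanges $\cdes$ and $\casc$; hence the number of cycles with $\casc(c)=j$ is also $A(m-1,j-1)$.

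Now set $m=2k+1$. For $k=0$ the unique fixed point has $\cdes=\casc=0$, so $w_1(t)=1$ and contributes the summand $x$. For $k\ge1$ a cycle satisfies $\cdes(c)+\casc(c)=2k+1$ and $1\le\cdes(c)\le2k$, so $\min(\cdes(c),\casc(c))=j$ with $1\le j\le k$ holds precisely for the cycles with $\cdes(c)=j$ together with those with $\casc(c)=j$; these form two disjoint families (as $2k+1$ is odd), each of size $A(2k,j-1)$ by the two facts above. Therefore
\[
w_{2k+1}(t)=\sum_{j=1}^{k}2A(2k,j-1)\,t^{j}=2\sum_{d\le k-1}A(2k,d)\,t^{d+1},
\]
and substituting these values into the exponential yields exactly the asserted formula for $B(t,x)$.

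The principal obstacle is the first cyclic fact: the rooting-at-the-maximum bijection must be arranged so that the forced cyclic descent at $m$ is counted exactly once, and one must verify that complementation genuinely sends cycles to cycles while swapping $\cdes$ and $\casc$. Everything else is either the cited \cref{thm:Spiro} or a formal application of the exponential formula to the decomposition of an odd order permutation into its odd cycles, the only bookkeeping being the convention that the empty permutation supplies the constant term $1$ to match $\exp$ at $x=0$. A more self-contained route, closer to the one taken in \cite{WZ20a}, would instead compute $B(t,x)$ directly from ballot permutations through the joint peak--descent distribution; there the difficulty migrates to exhibiting the odd-length atomic decomposition on the ballot side itself.
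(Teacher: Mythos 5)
Your proof is mathematically sound, but it runs in the opposite logical direction from the paper's. The paper does not prove \cref{Wang and Zhao} itself: it imports the result from \cite{WZ20a}, where it is established directly on the ballot side, via the joint peak--descent distribution and the lowest-point (reversal-concatenation) decomposition of a ballot permutation. You instead transfer to odd order permutations via \cref{thm:Spiro}, apply the exponential formula, and evaluate the single-cycle weight; your count of $2A(2k,j-1)$ cycles on a $(2k+1)$-set with $\min(\cdes,\casc)=j$ (rooting after the maximum, plus complementation) is exactly the computation this paper performs in Section 3.3, where $l(n,d)=2A(n-1,d-1)$ and $B(t,x)=\exp\brk1{\sum_{n\ \text{odd},\,d}l(n,d)t^{d}x^{n}/n!}$ --- except that there it is read off as a consequence of \cref{Wang and Zhao}, whereas you use it as the proof. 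The caveat you should state explicitly: the only proof of \cref{thm:Spiro} cited here proceeds by computing $B(t,x)$ independently (that computation \emph{is} \cref{Wang and Zhao}) and comparing with the odd-order side, so if dependencies are traced into \cite{WZ20a} your argument is circular; it is legitimate only as a derivation of one quoted theorem from another, which is how this paper treats both. One small patch: \cref{thm:Spiro} is stated for $0\le d\le\floor{(n-1)/2}$, while your generating-function identity needs all $d$; add the one-line observation that $b(n,d)$ and the count of $\pi\in\O_n$ with $M(\pi)=d$ both vanish for $d>\floor{(n-1)/2}$, since $\des(\pi)\le\asc(\pi)$ forces the former and $M(\pi)\le\sum_c(\lvert c\rvert-1)/2\le(n-1)/2$ the latter.
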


Wang and Zhang~\cite{WZ20}
gave another conjecture which refined \cref{thm:Spiro} by tracking the neighbors of the
largest letter in these permutations. They defined a word $u$ as a \emph{factor} of a word $w$ if there exist words $x$ and $y$ such that
$w = xuy$, and a word $u$ as a \emph{cyclic factor} of a permutation $\pi\in\mathcal{S}_n$ if $u$ is a factor of some word $v$ such that $(v)$ is a cycle of $\pi$. This conjecture is confirmed in this paper by computing their multivariate generating functions respectively,
and thus \cref{thm:WZ} follows.

\begin{thm}[Wang and Zhang]\label{thm:WZ}
For all $n$, $d$, and $2\leq j \leq n-1$, we have $b_{n,d}(1,j)+b_{n,d}(j,1)= 2p_{n,d}(1,j)$,
where $b_{n,d}(i,j)$ is the number of ballot permutations of length $n$ with $d$ descents which have $inj$ as a factor, and $p_{n,d}(i,j)$ is the number of odd order permutations of length $n$ with $M(\pi)=d$ which have $inj$ as a cyclic factor.
\end{thm}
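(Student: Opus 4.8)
The plan is to prove the identity by computing, on each of the two sides, a multivariate generating function that refines the bivariate series $B(t,x)$ of \cref{Wang and Zhao} by additionally recording the two letters that flank the largest entry $n$, and then matching the two refinements. The technical engine is the closed form for the refined Eulerian numbers $A(n,d,j)$ (permutations of $[n]$ with $d$ descents and first letter $j$) announced in the abstract: once a permutation or a cycle is cut at its maximum, the letter immediately following $n$ becomes the first letter of the resulting word, so its joint distribution with the descent number is exactly what $A(n,d,j)$ tracks. I would first establish that closed form and then use it as a black box on both sides, introducing an extra variable $y$ to mark the value of the distinguished neighbour $j$ and extracting the relevant coefficient at the end.

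On the odd order side I would begin with a reduction that explains the factor $2$. Reversing the single cycle that contains $n$ sends a cycle $(n,j,\dotsm,1)$ to its inverse $(n,1,\dotsm,j)$, interchanges $\cdes$ and $\casc$ on that cycle while fixing all others, and therefore preserves both $M(\pi)$ and the odd-order property; hence $p_{n,d}(1,j)=p_{n,d}(j,1)$ and $2p_{n,d}(1,j)=p_{n,d}(1,j)+p_{n,d}(j,1)$, which counts odd order permutations whose cyclic neighbours of $n$ are $\{1,j\}$ in either orientation. Writing the cycle through $n$ as $(n,w_1\dotsm w_{m-1})$, its neighbours of $n$ are the successor $w_1$ and the predecessor $w_{m-1}$, and since $n$ is the maximum one has $\cdes=1+\des(w)$ and $\casc=1+\asc(w)$, so that this cycle contributes $\min(\cdes,\casc)=1+\min(\des(w),\asc(w))$ to $M$. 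Marking the cycle that contains the maximum factors the exponential generating function as a product of the contribution of that cycle, refined by $w_1$ and $w_{m-1}$, with the generating function for the arbitrary odd order permutation formed by the remaining elements; the dependence on the prescribed label $j$ is governed precisely by the series for $A(n,d,j)$.

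On the ballot side I would cut a ballot permutation at its maximum, writing $\pi=\alpha\,n\,\beta$, so that the factor $inj$ amounts to $\alpha$ ending in $i$ and $\beta$ beginning with $j$; the maximum contributes exactly one descent (an ascent into $n$ and a descent out of $n$), and the first letter of $\beta$ is $j$, so the descent-and-first-letter data of $\beta$ are again encoded by $A(n,d,j)$. Conditioning on the position and the two neighbours of $n$ — paralleling the derivation of $B(t,x)$ from the peak–descent distribution in \cite{WZ20a} — yields a refined ballot generating function in which the distinguished component through $n$ multiplies the generating function of the remaining ballot structure. Because \cref{Wang and Zhao,thm:Spiro} already identify the two ``remaining'' generating functions, the theorem reduces to matching the two distinguished-component factors, one recording the neighbours of $n$ in the ballot permutation and the other the cyclic neighbours of $n$ in the odd order permutation.

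The main obstacle is the $\min(\cdes,\casc)$ in the definition of $M$: unlike the plain descent number it is not additive, so its generating function cannot be read off directly and must be produced by pairing each post-maximum word $w$ with its reversal, which turns $\min(\des(w),\asc(w))$ into a symmetric quantity. Carrying out this symmetrization while simultaneously pinning the first and last letters of $w$ to the prescribed labels $1$ and $j$ is exactly what forces the symmetric combinations $p_{n,d}(1,j)+p_{n,d}(j,1)$ and $b_{n,d}(1,j)+b_{n,d}(j,1)$, and I expect that verifying the two resulting closed forms agree term by term — after substituting the explicit expression for the $A(n,d,j)$ generating function and disentangling the interlocking summations over the cycle length $m$ and the rank of $j$ — is where essentially all of the generatingfunctionology will reside.
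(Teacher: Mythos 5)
Your odd-order side is essentially the paper's own argument: mark the cycle through $n$, use the exponential formula to split off the remaining odd order permutation (whose series is $B(t,x)$ by \cref{Wang and Zhao}), and symmetrize $\min(\cdes,\casc)$ by allowing the word obtained from cutting the cycle at $n$ to have either $d-2$ or $n-d-2$ descents — this is exactly how \cref{pndxyz} arrives at $P(t,x,y)=tx^2y\,B(t,x+xy)\,U(t,x,y)$, with $U$ assembled from the closed form for $A(n,d,j)$ of \cref{yinliv2}. Your observation that reversing the cycle through $n$ gives $p_{n,d}(1,j)=p_{n,d}(j,1)$ is correct and is implicit in the paper's use of $U(n-2,d-1,j-1)=A(n-2,d-2,j-1)+A(n-2,n-d-2,j-1)$.

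The gap is on the ballot side. Writing $\pi=\alpha\,n\,\beta$ and asserting that the distinguished component through $n$ ``multiplies the generating function of the remaining ballot structure'' presupposes a product decomposition that does not exist: the ballot property is a condition on \emph{all} prefixes, so which suffixes $\beta$ are admissible depends on the terminal height $h(\alpha)$; the set of valid pairs $(\alpha,\beta)$ is not a product set, $\beta$ need not itself be ballot, and there is no identification of the ``remaining'' structure via \cref{Wang and Zhao} or \cref{thm:Spiro}. (Contrast this with the odd-order side, where the cycle through $n$ really is an independent component.) This is precisely why the paper never computes $B(t,x,y)$ by cutting at the maximum. Instead it introduces $E(n,d,j)$, counting \emph{all} permutations with $1nj$ or $jn1$ as a factor — for these the cut at $n$ does factor, yielding the closed form of \cref{recEndj} and \cref{D2} — and then uses the lowest-point (reversal-concatenation) bijection to prove the quadratic relation $B(t,x,y)B\bigl(\tfrac{1}{t},tx(1+y)\bigr)+B\bigl(\tfrac{1}{t},tx,y\bigr)B(t,x+xy)=(1+t)E(t,x,y)$ of \cref{recendj} and \cref{D1}, which determines $B(t,x,y)$ uniquely. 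The theorem then follows by checking that $2P(t,x,y)$ satisfies the same relation, using $B(t,x)B(\tfrac{1}{t},tx)=1+(1+t)A(t,x)$ from \cite{WZ20a} and Proposition~\ref{utxy3} to reduce everything to an identity among the closed forms of $A$, $\hat{U}$ and $E$. To repair your proposal you would need either this detour through $E$ and the functional equation, or some substitute device (e.g., carrying the height of $\alpha$ as an additional variable) in place of the nonexistent factorization; the appeal to ``paralleling the derivation of $B(t,x)$'' does not supply one, since that derivation itself rests on the lowest-point bijection rather than on a cut at the maximum.
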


With \cref{thm:WZ} and the Toeplitz property of $b_{n,d}(i,j)+b_{n,d}(j,i)$ and $p_{n,d}(i,j)$ showed in ~\cite{WZ20}, we obtain in \cref{RS} the generating function which P. R. Stanley expressed interest in private communication, and is defined by
\[
P(t,x,y,z)=\sum_{n,d}\sum_{1\leq i<j\leq n-1}\dfrac{2p_{n,d}(i,j)t^{d}x^{n}y^{i}z^{j}}{(j-i-1)!(n-j+i-2)!}.
\]

There are various generalizations of Eulerian numbers. Here we focus on a refinement $A(n,d,j)$ of $A(n,d)$ defined by Brenti and Welker~\cite{BW08}, which is the number of permutations of length $n$ with $d$ descents and first letter $j$ for $n\geq 1$, $0\leq d\leq n-1$ and $1\leq j\leq n$. They show the real-rootedness of the following polynomials
\[
A^{\langle j\rangle}_n(t)=\sum_{d}A(n,d,j)t^{d}\quad\text{for $n\ge 1$},
\]
which refine the Eulerian polynomials. We adopt the convention
$A(n,d,j)=0$ for other $n,d$ and $j$. Define
\[
A(t,x,y):=\sum_{n,d}\sum_{j=1}^{n}\frac{A(n,d,j)t^{d}x^{n}y^{j}}{(j-1)!(n-j)!},
\]
We would give a formula for $A(t,x,y)$ in close form, based what a series of formulas with generatingfunctionology could be established, and prove \cref{thm:WZ}.

For this purpose, we give several definitions of enumerating sequences
and calculate their multivariate generating functions in the following sections.
For $n\geq2, 1\leq j\leq n$ and
$0\leq d\leq n$, let $U(n,\,d,\,j)$ denote the number of
permutations of length $n$ with $d-1$ descents or $n-d-1$ ascents and
first letter $j$. Then
\begin{equation}\label{undj}
U(n,d,j)=A(n,\, n-d-1,\, j)+A(n,\, d-1,\, j),
\end{equation}
and
\begin{equation}\label{undj1}
U(n,d,j)=U(n,\, n-d,\, j).
\end{equation}
We define the multivariate generating function for $\{U(n,d,j)\}_{n,d,j}$
\[
\hat{U}(t,x,y):=\sum_{n,d}\sum_{j=1}^{n}\frac{U(n,d,j)t^{d}x^{n}y^{j}}{(j-1)!(n-j)!},
\]
and
\[
U(t,x,y):=\sum_{n ~\text{is odd},j}\sum_{d\leq\frac{n-1}{2}}\frac{U(n,d,j)t^{d}x^{n}y^{j}}{(j-1)!(n-j)!}.
\]
For $n\geq 3, 2\leq j\leq n-1$ and $0\leq d\leq n-1$, we denote by $E(n,d,j)$ the number of permutations in $S_n$ of length $n$ with $d$ descents which have $1nj$ or $jn1$ as a factor,
and define $b(n,d,j):=b_{n,d}(1,j)+b_{n,d}(j,1)$.
Define
\[
E(t,x,y):=\sum_{n,d}\sum_{j=2}^{n-1}\dfrac{E(n,d,j)t^{d}x^{n}y^{j}}{(n-j-1)!(j-2)!},
\]
\[
B(t,x,y):=\sum_{n,d}\sum_{j=2}^{n-1}\dfrac{b(n,d,j)t^dx^ny^j}{(n-j-1)!(j-2)!},
\]
and
\[
P(t,x,y):=\sum_{n,d}\sum_{j=2}^{n-1}\dfrac{p_{n,d}(1,j)t^{d}x^{n}y^{j}}{(j-2)!(n-j-1)!}.
\]

The rest of this paper is organized as follows. In Section 2, we calculate the formulas for $A(t,x,y), \hat{U}(t,x,y)$ and $U(t,x,y)$. In Section 3, we calculate the formulas for $E(t,x,y)$, $P(t,x,y)$ and a relation between $B(t,x), B(t,x,y)$ and $E(t,x,y)$. Finally,
in Section 4 we confirm the conjecture of Wang and Zhang and calculate $P(t,x,y,z)$.

\section{Formulas for $A(t,x,y), \hat{U}(t,x,y)$ and $U(t,x,y)$}
This section is devoted to giving the generating functions of
$\{A(n,d,j)\}_{n,d,j}$, which is the basis of section 3.
One can get the following recursion for $A(n,d,j)$ by the values of the second letters in permutations, which could be found in \cite{BW08}.
\begin{lem} \label{BWlem}
For $n\geq 2$, $1\leq j\leq n$ and $0\leq d\leq n-1$, we have
		\begin{equation}
		\begin{split}\label{ditui1}
		A(n,d,j)=\sum_{i=1}^{j-1}A(n-1,\,d-1,\,i) + \sum_{i=j+1}^{n} A(n-1,\,d,\,i-1).
		\end{split}
		\end{equation}
\end{lem}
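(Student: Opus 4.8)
The plan is to classify the permutations counted by $A(n,d,j)$ according to their second letter, exactly as the phrase ``by the values of the second letters'' suggests. Fix $n\ge 2$, $1\le j\le n$, and $0\le d\le n-1$, and let $\pi=\pi_1\pi_2\cdots\pi_n\in\mathcal S_n$ with $\pi_1=j$ and $\des(\pi)=d$. Deleting the leading letter produces a word $\pi_2\cdots\pi_n$ on the alphabet $[n]\setminus\{j\}$; applying the standardization map $\std$ (replace each entry by its rank, so values below $j$ are fixed and values above $j$ drop by one) yields a genuine permutation $\sigma\in\mathcal S_{n-1}$. Since $\std$ is order-preserving on the entries, it preserves the relative order of every adjacent pair, and hence $\des(\sigma)$ equals the number of descents of $\pi$ occurring among positions $2,\dots,n-1$.

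The only descent of $\pi$ not recorded by $\sigma$ is the one possibly contributed by the pair $(\pi_1,\pi_2)=(j,\pi_2)$, and whether it is a descent is governed by the sign of $j-\pi_2$. I would split into the two cases accordingly. If $\pi_2=i$ with $i<j$, then position $1$ is a descent, so $\des(\sigma)=d-1$; moreover $\std$ fixes the value $i$, so $\sigma$ has first letter $i$, and conversely every $\sigma\in\mathcal S_{n-1}$ with $d-1$ descents and first letter $i\in\{1,\dots,j-1\}$ arises from a unique such $\pi$. This case therefore contributes $\sum_{i=1}^{j-1}A(n-1,d-1,i)$. If instead $\pi_2=i$ with $i>j$, then position $1$ is an ascent, so $\des(\sigma)=d$, while $\std$ sends $i$ to $i-1$, making $i-1$ the first letter of $\sigma$; as $i$ ranges over $\{j+1,\dots,n\}$ this case contributes $\sum_{i=j+1}^{n}A(n-1,d,i-1)$.

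Summing the two contributions gives the claimed recursion, since deletion-and-standardization is a bijection from the permutations counted by $A(n,d,j)$ onto the disjoint union of the two families of shorter permutations described above (its inverse prepends $j$ and re-expands the entries of $\sigma$ back onto $[n]\setminus\{j\}$). The genuinely routine part, and the only place demanding care, is the bookkeeping of how $\std$ shifts the first letter in the two cases (fixed when $\pi_2<j$, decremented when $\pi_2>j$) and the matching of ranges, so that the index $i$ in the second sum records the original value $\pi_2$ rather than its standardization. I expect no real obstacle beyond this: the boundary convention $A(n-1,-1,\cdot)=0$ and the degenerate cases $j=1$ or $j=n$, where one of the two sums is empty, are absorbed by the stated conventions on $A(n,d,j)$.
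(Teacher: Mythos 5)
Your proof is correct and is exactly the argument the paper intends: the paper states this lemma without proof, noting only that the recursion follows ``by the values of the second letters'' (citing Brenti and Welker), and your deletion-and-standardization bijection with the case split on $\pi_2<j$ versus $\pi_2>j$ is precisely that outline, carried out in full. The bookkeeping of the first letter of the standardized permutation (fixed when $\pi_2<j$, decremented to $i-1$ when $\pi_2=i>j$) and the degenerate cases $j=1$, $j=n$, $d=0$ are all handled correctly, so nothing is missing.
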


By \cref{BWlem}, we can obtain the following Theorem by generating function calculation.
\begin{thm}\label{yinliv2}
We have
	$$A(t,x,y)=\dfrac{(t-1)xye^{(t-1)xy}}{t-e^{(t-1)x(1+y)}}.$$
\end{thm}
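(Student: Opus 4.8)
The plan is to reduce the claimed identity to an explicit formula for the refined Eulerian polynomials $a_{n,j}(t):=\sum_{d}A(n,d,j)t^{d}$ and then resum the series. First I would pass from \cref{BWlem} to a recursion for $a_{n,j}(t)$ by multiplying \eqref{ditui1} by $t^{d}$ and summing over $d$, which gives $a_{n,j}(t)=t\sum_{i=1}^{j-1}a_{n-1,i}(t)+\sum_{i=j}^{n-1}a_{n-1,i}(t)$ for $n\ge 2$. Since every permutation of length $n-1$ has some first letter, $\sum_{i=1}^{n-1}a_{n-1,i}(t)=A_{n-1}(t)$, so the recursion collapses to
\[
a_{n,j}(t)=A_{n-1}(t)+(t-1)\sum_{i=1}^{j-1}a_{n-1,i}(t).
\]
Taking the difference of the instances at $j$ and $j-1$ turns this into the Pascal-type relation $a_{n,j}(t)=a_{n,j-1}(t)+(t-1)\,a_{n-1,j-1}(t)$ for $j\ge 2$, with boundary value $a_{n,1}(t)=A_{n-1}(t)$ (deleting the leading letter $1$ and standardizing is a descent-preserving bijection onto $\mathcal S_{n-1}$).

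Next I would solve this relation in closed form. Iterating the Pascal-type recurrence from column $j$ down to the boundary column $j=1$ — at each of the $j-1$ steps one either keeps $n$ fixed or lowers it by one at the cost of a factor $t-1$ — yields
\[
a_{n,j}(t)=\sum_{p=0}^{j-1}\binom{j-1}{p}(t-1)^{p}A_{n-1-p}(t).
\]
Equivalently, this follows by induction on $n$ from the collapsed recursion together with the hockey-stick identity $\sum_{i=p+1}^{j-1}\binom{i-1}{p}=\binom{j-1}{p+1}$. A couple of small cases ($n\le 3$) can be checked directly against the count of permutations by first letter, to guard against indexing slips.

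Finally I would substitute this formula into $A(t,x,y)=\sum_{n}\sum_{j=1}^{n}a_{n,j}(t)\,x^{n}y^{j}/\bigl((j-1)!\,(n-j)!\bigr)$ and reindex by $p$, $m=n-1-p$, and $r=j-1-p$. Using $\binom{j-1}{p}/\bigl((j-1)!\,(n-j)!\bigr)=1/\bigl(p!\,r!\,(m-r)!\bigr)$ together with $x^{n}y^{j}=(xy)^{p+1}x^{m}y^{r}$, the triple sum separates as
\[
A(t,x,y)=\biggl[xy\sum_{p\ge 0}\frac{\bigl((t-1)xy\bigr)^{p}}{p!}\biggr]\biggl[\sum_{m\ge 0}A_{m}(t)\,x^{m}\sum_{r=0}^{m}\frac{y^{r}}{r!\,(m-r)!}\biggr].
\]
The first bracket is $xy\,e^{(t-1)xy}$; in the second, the inner sum equals $(1+y)^{m}/m!$, so the second bracket is $\sum_{m}A_{m}(t)\bigl(x(1+y)\bigr)^{m}/m!$, which by \eqref{gf:A} evaluates to $(t-1)/\bigl(t-e^{(t-1)x(1+y)}\bigr)$. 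Multiplying gives the asserted closed form.

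I expect the main obstacle to be organizational rather than deep: the crucial simplification is the collapse to the $(t-1)$-weighted Pascal recurrence, and the delicate point in the final step is carrying out the reindexing so that the series genuinely factors into the product of $xy\,e^{(t-1)xy}$ and the Eulerian exponential generating function evaluated at $x(1+y)$. Verifying that the index map $(n,j,p)\leftrightarrow(p,m,r)$ is a bijection between the two summation ranges (in particular that $0\le r\le m$ corresponds exactly to $1\le j\le n$ and $0\le p\le j-1$) is the one place where care is needed.
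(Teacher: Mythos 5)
Your proof is correct, and it reaches the closed form by a genuinely different route than the paper's. Your Pascal-type relation $a_{n,j}(t)=a_{n,j-1}(t)+(t-1)\,a_{n-1,j-1}(t)$ is exactly the paper's \eqref{zuocha} summed against $t^{d}$, and both arguments use the same boundary fact $A(n,d,1)=A(n-1,d)$; but at the decisive step the paper converts \eqref{zuocha} into a first-order linear PDE for $A(t,x,y)$, invokes Maple to obtain the general solution $A(t,x,y)=xye^{(t-1)xy}F(t,x+xy)$ in \eqref{v2jielun}, and then determines the arbitrary function $F$ from the boundary series, whereas you solve the recurrence explicitly at the level of coefficients, obtaining
\[
a_{n,j}(t)=\sum_{p=0}^{j-1}\binom{j-1}{p}(t-1)^{p}A_{n-1-p}(t),
\]
and resum directly. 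Your steps all check out: the collapsed recursion $a_{n,j}(t)=A_{n-1}(t)+(t-1)\sum_{i=1}^{j-1}a_{n-1,i}(t)$ follows from \cref{BWlem} since $\sum_{i=1}^{n-1}a_{n-1,i}(t)=A_{n-1}(t)$; the displayed closed form satisfies the Pascal relation by $\binom{j-2}{p}+\binom{j-2}{p-1}=\binom{j-1}{p}$ (and agrees with small cases, e.g.\ $a_{2,2}(t)=t$ and $a_{3,2}(t)=2t$); the reindexing $m=n-1-p$, $r=j-1-p$ is indeed a bijection of index ranges, with $\binom{j-1}{p}/\bigl((j-1)!(n-j)!\bigr)=1/\bigl(p!\,r!\,(m-r)!\bigr)$, so the series factors as you claim, and \eqref{gf:A} --- with the $m=0$ term $A_{0}(t)=1$ included, which is precisely the constant $1$ in $F(t,x)=A(t,x)+1$ from \eqref{F} --- completes the evaluation. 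As for what each approach buys: yours is elementary and computer-free, every step an identity checkable by hand, and it renders the factorized shape $xy\,e^{(t-1)xy}\cdot F(t,x+xy)$ --- which in the paper appears as an unexplained output of Maple --- completely transparent (the exponential factor arises from the $p$-sum, and the substitution $x\mapsto x(1+y)$ from the inner binomial sum $(1+y)^{m}/m!$); the paper's PDE method is shorter on the page and would still function in situations where the recurrence admits no apparent explicit solution, but as written it delegates the heart of the argument to software.
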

\begin{proof}
Substituting $j$ by $j+1$ in \eqref{ditui1},we have
		\begin{equation}
		\begin{split}\label{ditui2}
		A(n,\,d,\,j+1)=\sum_{i=1}^{j}A(n-1,\,d-1,\,i)+\sum_{i=j+2}^{n}A(n-1,\,d,\,i-1).	
		\end{split}
		\end{equation}
subtracting \eqref{ditui1} from \eqref{ditui2},we get
	\begin{equation}
	\begin{split}\label{zuocha}
	A(n,\,d,\,j+1)=A(n,d,j)+A(n-1,\,d-1,\,j)-A(n-1,\,d,\,j),
	\end{split}
	\end{equation}
where $n\geq 2$, $1\leq j\leq n-1$ and $0\leq d\leq n-1$.
	
Multiplying each term in \eqref{zuocha} by$ \dfrac{t^{d}x^{n}y^{j}}{(j-1)!(n-j)!}$ and summing over all integers $n,d$ and $1 \leq j \leq n-1$ with subscript transformation, we have
\begin{align}\label{key2}
\sum_{n,d}\sum_{1\leq\,j\leq\,n-1}\dfrac{A(n,\,d,\,j+1)t^{d}x^{n}y^{j+1}}{(n-j-1)!(j-1)!}
&=\,\sum_{n,d}\sum_{j\geq\,1}\dfrac{(j-1)A(n,\,d,\,j)t^{d}x^{n}y^{j}}{(n-j)!(j-1)!}\notag\\
&=y\sum_{n,d}\sum_{j\geq1}\dfrac{A(n,d,j)jt^{d}x^{n}y^{j-1}}{(n-j)!(j-1)!}-A(t,x,y)\notag\\
&=y\dfrac{\partial A(t,x,y)}{\partial y}-A(t,x,y),
\end{align}
\begin{align}\label{key3}
\sum_{n,d}\sum_{1\leq\,j\leq\,n-1}\dfrac{A(n,d,j)t^{d}x^{n}y^{j+1}}{(n-j-1)!(j-1)!}
&=\,\sum_{n,d}\sum_{1\leq\,j\leq\,n-1}A(n,d,j)\dfrac{(n-j)t^{d}x^{n}z^{j+1}}{(n-j)!(j-1)!}\notag\\
&=xy\sum_{n,d}\sum_{1\leq\,j\leq\,n-1}\dfrac{nA(n,d,j)t^{d}x^{n-1}y^{j}}{(n-j)!(j-1)!} \notag\\ &\quad-y^{2}\sum_{n,d}\sum_{1\leq\,j\leq\,n-1}\dfrac{jA(n,d,j)t^{d}x^{n}y^{j-1}}{(n-j)!(j-1)!}\notag\\
&=xy\dfrac{\partial A(t,x,y)}{\partial x}-y^{2}\dfrac{\partial A(t,x,y)}{\partial y},
\end{align}	
\begin{align}\label{key4}
\sum_{n,d}\sum_{1\leq\,j\leq\,n-1}\dfrac{A(n-1,\,d-1,\,j)t^{d}x^{n}y^{j+1}}{(n-j-1)!(j-1)!}
=txy A(t,x,y),
\end{align}
\begin{align}\label{key5}
\sum_{n,d}\sum_{1\leq\,j\leq\,n-1}\dfrac{A(n-1,\,d,\,j)t^{d}x^{n}y^{j+1}}{(n-j-1)!(j-1)!}
=xy A(t,x,y).
\end{align}
Combining \eqref{key2}, \eqref{key3}, \eqref{key4} and \eqref{key5}, we get
$$y\dfrac{\partial A(t,x,y)}{\partial y}-A(t,x,y)=xy\dfrac{\partial A(t,x,y)}{\partial x}-y^{2}\dfrac{\partial A(t,x,y)}{\partial y}+txy A(t,x,y)-xy A(t,x,y).$$
By Maple, we have
	\begin{equation}\label{v2jielun}
		\begin{split}
			A(t,x,y)=xye^{(t-1)xy}F(t,x+xy),
		\end{split}
	\end{equation}
where $F(t,x)$ is an arbitrary differentiable function.
	
In the following we will determine $F(t,x)$.
Take derivatives on both sides of \eqref{v2jielun}, we have
\[
\dfrac{\partial A(t,x,y)}{\partial y}=x[(t-1)xy+1]e^{(t-1)xy}F(t,x+xy)+x^{2}ye^{(t-1)xy}F_{2}^{'}(t,x+xy),
\]
then
\[
\dfrac{\partial A}{\partial z}(t,x,0)=xF(t,x).	
\]
On the other hand, it is easy to see that $A(n,d,1)=A(n-1,\,d)$, then
\[
\dfrac{\partial A}{\partial z}(t,x,0)=\sum_{n,d} \dfrac{A(n,d,1)t^{d}x^{n}}{(n-1)!}
=x\,\sum_{n,d}\dfrac{A(n,d)\,x^{n}y^{d}}{n!} =xA(t,x)+x.
\]
Thus
\begin{equation}\label{F}
F(t,x)=A(t,x)+1=\frac{t-1}{t-e^{(t-1)x}}.
\end{equation}
Plugging \eqref{F} into \eqref{v2jielun},one can complete the proof.
\end{proof}
\begin{rem}
The close form of the generating function for $\{A(n,d,j)\}$ in \eqref{yinliv2}
doesn't seem to be seen elsewhere. The conclusion could also be obtained by
generating function calculation through Zhang's lemma in \cite{Zhang20}: for $1\leq j\leq n$,
\[
\frac{A^{\langle l\rangle}_n(x)}{(1-x)^n}=\delta_{l,1}+\sum_{i=1}^{\infty}j^{l-1}(j+1)^{n-l}x^j.
\]
\end{rem}

It is direct to calculate $\hat{U}(t,x,y)$ according to the definition
of \eqref{undj} and \cref{yinliv2}.
\begin{cor} \label{utxy2}
We have
\[
\hat{U}(t,x,y)=tA(t,x,y)+\frac{1}{t}A(\frac{1}{t},\,xt,\,y),
\]
i.e.,
\[
\hat{U}(t,x,y)=\frac{(t-1)xy(te^{(t-1)xy}+e^{(t-1)x})}{t-e^{(t-1)x(y+1)}}.
\]
\end{cor}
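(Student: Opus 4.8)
The plan is to exploit the decomposition $U(n,d,j)=A(n,\,n-d-1,\,j)+A(n,\,d-1,\,j)$ recorded in \eqref{undj}, which lets me split the defining series for $\hat U(t,x,y)$ into two pieces, each of which is a specialization of the already-known series $A(t,x,y)$ after a reindexing in the descent parameter $d$. Since the weight $\frac{1}{(j-1)!(n-j)!}$ carries no dependence on $d$, it is left untouched by any such reindexing, and the bookkeeping reduces to tracking the monomial $t^{d}x^{n}$.

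First I would substitute \eqref{undj} into
\[
\hat U(t,x,y)=\sum_{n,d}\sum_{j=1}^{n}\frac{U(n,d,j)\,t^{d}x^{n}y^{j}}{(j-1)!(n-j)!}
\]
and write $\hat U=S_1+S_2$, where $S_1$ collects the $A(n,\,n-d-1,\,j)$ contributions and $S_2$ the $A(n,\,d-1,\,j)$ contributions. For $S_2$ I would let the summation index be $d-1$ (equivalently replace $d$ by $d+1$); this sends $t^{d}\mapsto t\cdot t^{d}$ and fixes everything else, so $S_2=t\,A(t,x,y)$ at once. For $S_1$ I would make the involutive substitution $d\mapsto n-d-1$, under which $t^{d}x^{n}\mapsto \tfrac{1}{t}(1/t)^{d}(xt)^{n}$, precisely the weight appearing in $A(1/t,\,xt,\,y)$; hence $S_1=\tfrac{1}{t}A(1/t,\,xt,\,y)$. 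Adding the two pieces yields the first displayed identity
\[
\hat U(t,x,y)=t\,A(t,x,y)+\tfrac{1}{t}\,A(1/t,\,xt,\,y).
\]

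For the explicit formula I would then substitute the closed form of \cref{yinliv2} into both summands. The first term is immediate. For the second, the key simplification is $(1/t-1)(xt)=-(t-1)x$, which collapses every exponential $e^{(1/t-1)(xt)(\,\cdot\,)}$ occurring in $A(1/t,\,xt,\,y)$ into $e^{-(t-1)x(\,\cdot\,)}$; after clearing the common factor $e^{(t-1)x(1+y)}$ from numerator and denominator, the second term becomes $\frac{(t-1)xy\,e^{(t-1)x}}{t-e^{(t-1)x(1+y)}}$, which shares the denominator of the first term. Combining over this common denominator gives the asserted formula.

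The computation is essentially routine; the only points demanding care are (i) verifying that the reflection $d\mapsto n-d-1$ produces exactly the argument change $t\mapsto 1/t$, $x\mapsto xt$ rather than some other pair, and (ii) checking that extending all summations over the integers is harmless, which it is because $A(n,d,j)=0$ outside its declared range, so the reindexing introduces no spurious terms.
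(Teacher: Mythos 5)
Your proposal is correct and follows exactly the route the paper intends: the paper gives no written-out proof of \cref{utxy2}, asserting only that it is ``direct to calculate'' from \eqref{undj} and \cref{yinliv2}, and your splitting $\hat U=S_1+S_2$ with the reindexings $d\mapsto d+1$ and $d\mapsto n-d-1$ (yielding $t\,A(t,x,y)$ and $\tfrac{1}{t}A(1/t,xt,y)$ respectively) is precisely that calculation. Your simplification of the second summand via $(1/t-1)(xt)=-(t-1)x$ and clearing $t\,e^{(t-1)x(1+y)}$ correctly reproduces the stated closed form, so nothing is missing.
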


With \cref{utxy2}, we obtain the following corollary.
\begin{prop} \label{utxy3} we have
\[
U(t,x,y)+U(\frac{1}{t},tx,y)=\dfrac{\hat{U}(t,x,y)-\hat{U}(t,-x,y)}{2}.
\]
In other words,
\[
U(t,x,y)=\D^{t,x}\brk4{\dfrac{xy(t+1)(t-1)^2e^{(t-1)x}\brk1{e^{2(t-1)xy}+1}}
{2\brk1{e^{(t-1)x(y+1)}}-1\brk1{t-e^{(t-1)x(y+1)}}}}.
\]
\end{prop}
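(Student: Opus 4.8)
The plan is to prove the first displayed identity directly from the symmetry \eqref{undj1}, and then to obtain the operator form by inserting the closed expression of \cref{utxy2} and simplifying.

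I would first record the structural fact on which everything rests: for each $n$ the coefficient of $t^{d}x^{n}$ in $\hat U(t,x,y)$, namely $\sum_{j}U(n,d,j)y^{j}/[(j-1)!(n-j)!]$, is invariant under $d\mapsto n-d$ by \eqref{undj1}, and when $n$ is odd there is no central index $d=n/2$. Since $\tfrac12\brk1{\hat U(t,x,y)-\hat U(t,-x,y)}$ extracts exactly the terms of odd $x$-degree, it equals $\sum_{n\ \mathrm{odd}}\sum_{d}\sum_{j}U(n,d,j)t^{d}x^{n}y^{j}/[(j-1)!(n-j)!]$, i.e.\ the full $d$-sum restricted to odd $n$.

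Next I would cut this sum at the centre of the palindrome. The part with $d\le(n-1)/2$ is $U(t,x,y)$ by definition. For the part with $d\ge(n+1)/2$ I would substitute $e=n-d$ and invoke \eqref{undj1} to replace $U(n,d,j)$ by $U(n,e,j)$; then $t^{d}x^{n}=t^{n-e}x^{n}=(1/t)^{e}(tx)^{n}$, so this part is precisely $U(\tfrac1t,tx,y)$. Adding the two halves yields the first assertion. The one genuinely delicate point is this bookkeeping: one must verify that $\{0,1,\dots,n\}$ splits into $\{d\le(n-1)/2\}$ and $\{d\ge(n+1)/2\}$ with neither overlap nor omission, which is exactly where the oddness of $n$ is used; the companion algebra is routine.

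For the explicit form I would substitute \cref{utxy2} into $\tfrac12\brk1{\hat U(t,x,y)-\hat U(t,-x,y)}$ and place the two fractions over the common denominator $\brk1{t-e^{(t-1)x(y+1)}}\brk1{t-e^{-(t-1)x(y+1)}}$. Expanding the products, the cross terms cancel and the numerator collapses to $(t-1)^{2}(t+1)xy\brk1{e^{(t-1)xy}+e^{-(t-1)xy}}$; multiplying numerator and denominator by $e^{(t-1)x(y+1)}$ then turns $e^{(t-1)xy}+e^{-(t-1)xy}$ into $e^{(t-1)x}\brk1{e^{2(t-1)xy}+1}$ and rewrites the denominator into the displayed shape. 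Since the operator $\D^{t,x}$ retains by construction precisely the $d\le(n-1)/2$ part, that is $U(t,x,y)$, and annihilates $U(\tfrac1t,tx,y)$, applying it to both sides of the first identity gives the second display.
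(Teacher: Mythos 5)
Your proof is correct and follows essentially the same route as the paper's: both use the symmetry \eqref{undj1} together with the oddness of $n$ (no central index $d=n/2$) to identify $U(t,x,y)+U\brk1{\frac{1}{t},tx,y}$ with the odd-$x$-degree part $\frac{1}{2}\brk1{\hat{U}(t,x,y)-\hat{U}(t,-x,y)}$, and then appeal to the defining property of $\D^{t,x}$ to isolate the $d\le(n-1)/2$ terms. Your algebraic simplification is also sound, and in fact your denominator $2\brk1{te^{(t-1)x(y+1)}-1}\brk1{t-e^{(t-1)x(y+1)}}$ corrects an evident typo (a dropped factor $t$) in the paper's displayed formula.
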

\begin{proof}
According to \eqref{undj1} and using subscript transformation, we have
\begin{align*}
U(t,x,y)+U(\frac{1}{t},tx,y)
&=\sum_{n ~\text{is odd},j}\sum_{d\leq\frac{n-1}{2}}\frac{U(n,d,j)t^{d}x^{n}y^{j}}{(j-1)!(n-j)!}
+\sum_{n ~\text{is odd},j}\sum_{d\leq\frac{n-1}{2}}\frac{U(n,d,j)t^{n-d}x^{n}y^{j}}{(j-1)!(n-j)!}\\
&=\sum_{n ~\text{is odd},d,j}\frac{U(n,d,j)t^{d}x^{n}y^{j}}{(j-1)!(n-j)!}=\dfrac{\hat{U}(t,x,y)-\hat{U}(t,-x,y)}{2}\\
&=\dfrac{xy(t+1)(t-1)^2e^{(t-1)x}\brk1{e^{2(t-1)xy}+1}}
{2\brk1{e^{(t-1)x(y+1)}}-1\brk1{t-e^{(t-1)x(y+1)}}}.
\end{align*}
Noting that
$U(t,x,y)$ is a multivariate formal power series
with terms of the form $t^{d}x^{n}y^{k}$ such that $d \leq (n-1)/2$,
and the terms of $U(\frac{1}{t},tx,y)$ are of the form
$t^{d}x^{n}y^{k}$ such that $d >(n-1)/2$.
According the definition of $\D^{t,x}$, we obtain the desired equation.
\end{proof}

\section{Formulas for $E(t,x,y)$, $P(t,x,y)$ and $B(t,x,y)$}
\subsection{The generating function of $\{E(n,d,j)\}_{n,d,j}$}
 \begin{thm} \label{recEndj}
 For $n\geq3$, $2\leq j \leq n-1$ and $0\leq d\leq n-1$, We have
 \begin{multline}\label{Endj}
  E(n,d,j) =\sum_{l=1}^{n-2}\sum_{k=0}^{l-1}\sum_{u=0}^{j-2}\binom{j-2}{u}\binom{n-j-1}{l-1-u}A(n-l-2, d-k-1)U(l,k,u+1)\\+V(n-2, d-1, j-1)-V(n-2, d-2, j-1),
\end{multline}
where $U(l,k,u+1)=V(l, l-1-k, u+1)+V(l, k-1, u+1)$.
 \end{thm}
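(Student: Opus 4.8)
The plan is to count $E(n,d,j)$ by decomposing a permutation around the triple that contains the largest letter $n$, and to track how the two possible orientations $1nj$ and $jn1$ recombine into the statistic $U$. Since $n$ occurs once, it has a unique left and right neighbour, so a permutation can contain at most one of the factors $1nj$ and $jn1$; hence $E(n,d,j)$ splits without overlap into the number of $\pi$ with $d$ descents having $1nj$ as a factor plus those having $jn1$ as a factor. In either case I would delete the two extreme letters $1$ and $n$: this leaves a word on $\{2,\dots,n-1\}$ which I cut at the former position of $n$ into an $A$-part (carrying no first-letter constraint) and a $U$-part (the block flanking $n$ that begins or ends with $j$). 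Writing $l$ for the length of the $U$-part, the $A$-part has length $n-l-2$, which is exactly the pairing of lengths appearing in $A(n-l-2,\cdot)$ and $U(l,\cdot,\cdot)$.

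For the factor $1nj$, write $\pi=P_1\,1\,n\,j\,P_2$ and let the $U$-part be $R'=jP_2$ (everything after $n$) and the $A$-part be $P_1$ (everything before $1$). Distributing the $j-2$ values below $j$ and the $n-j-1$ values above $j$ between the two blocks, with $u$ small and $l-1-u$ large values placed in $R'$, accounts for the factor $\binom{j-2}{u}\binom{n-j-1}{l-1-u}$ and makes $j$ standardize to $u+1$, the first letter of $\std(R')$. A descent count then gives, for nonempty $P_1$, the identity $\des(\pi)=\des(P_1)+1+1+\des(R')$, the two isolated descents coming from the edge into $1$ and the edge $n\to j$; setting $\des(P_1)=d-k-1$ and $\des(R')=k-1$ yields $\des(\pi)=d$ and the contribution $A(n-l-2,d-k-1)\,A(l,k-1,u+1)$, which is the second summand of $U(l,k,u+1)=V(l,l-1-k,u+1)+V(l,k-1,u+1)$ (recall $V(n,d,j)=A(n,d,j)$).

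For the factor $jn1$, write $\pi=Q_1\,j\,n\,1\,Q_2$ and take the $A$-part to be $Q_2$ and the $U$-part to be $S'=Q_1 j$ (everything before $n$, ending in $j$). The same value distribution gives the same binomial factor, and now the only isolated descent is $n\to 1$, so $\des(\pi)=\des(S')+1+\des(Q_2)$; setting $\des(Q_2)=d-k-1$ and $\des(S')=k$ again forces $\des(\pi)=d$. Because $S'$ ends (rather than begins) in $j$, reversing $\std(S')$ turns its $k$ descents into $l-1-k$ descents and moves $j$ to the front, so the number of admissible $S'$ equals $A(l,l-1-k,u+1)$, the first summand of $U(l,k,u+1)$. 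Summing the two cases over $l,k,u$ therefore produces the main triple sum with the combined factor $U(l,k,u+1)$.

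The one place that needs extra care — and the main obstacle — is the boundary where the $A$-part is empty, and here the two orientations behave asymmetrically. In the $jn1$ case the edge out of $1$ is always an ascent, so the count above remains valid even when $Q_2=\emptyset$, and no correction is needed. In the $1nj$ case, however, the isolated descent into $1$ disappears when $P_1=\emptyset$, i.e.\ when $\pi=1\,n\,j\,P_2$, so the $l=n-2$ slice of the main sum mis-records the number of descents by one. For such $\pi$ the block $R'=jP_2$ has length $n-2$ on the values $\{2,\dots,n-1\}$ and $j$ standardizes to $j-1$; the main sum erroneously includes those with $\des(R')=d-2$ (which have $\des(\pi)=d-1$) and omits those with $\des(R')=d-1$ (which have $\des(\pi)=d$). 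Subtracting the former, $V(n-2,d-2,j-1)$, and adding the latter, $V(n-2,d-1,j-1)$, corrects the count and produces exactly the two boundary terms in the statement. Assembling the main sum and these corrections gives the claimed formula; checking that the ranges $1\le l\le n-2$, $0\le k\le l-1$, $0\le u\le j-2$ together with the vanishing conventions for $A$ account for every summand without over- or under-counting is the routine but delicate remaining part.
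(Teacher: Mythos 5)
Your proof is correct and takes essentially the same route as the paper's: both decompose $\pi$ at the occurrence of $n$ into an $A$-block and a $U$-block (reversing the block that ends in $j$ in the $jn1$ case, distributing the values below and above $j$ via $\binom{j-2}{u}\binom{n-j-1}{l-1-u}$), and both handle the degenerate $\pi_1=1$ situation through exactly the correction $V(n-2,d-1,j-1)-V(n-2,d-2,j-1)$. The only difference is bookkeeping: the paper organizes the argument as three explicit cases ($jn1$; $1nj$ with $\pi_1\neq 1$; $1nj$ with $\pi_1=1$) and then folds the first two into $U(l,k,u+1)$ with the $l=n-2$ spillover subtracted, which is precisely your boundary-correction analysis in a different order.
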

 \begin{proof}
  First we give a formula of $E(n,d,j)$ with which one can derive the
  expression of $E(t,x,y)$.

  For any permutation $\pi=\pi_1 \pi_2 \cdots \pi_n$ of length $n$ with $d$ descents which has $1nj$ or $jn1$ as a factor, there are three cases of $\pi$:
 \begin{itemize}
 \item
 $jn1$ is in $\pi$. Suppose that $\pi_l=j$, $\des(\pi_1 \pi_2 \cdots \pi_j)=k$
 and there are $u$ letters in $\{\pi_1, \pi_2,\dotsm \pi_{l-1}\}$ which
 are less than $j$, where $1\leq k+1 \leq l\leq n-2$ and $0\leq u \leq j-2$.
 Then $\pi_{l}\pi_{l-1}\dotsm\pi_1$ is a permutation of length $l$
 with the $(u+1)$th largest letter as the first letter, and
 \[
 \des(\pi_{l}\pi_{l-1}\dotsm\pi_1)=l-1-\asc(\pi_{l}\pi_{l-1}\dotsm\pi_1)=l-1-k.
 \]
 Considering $n$ is a descent and 1 is a ascent in $\pi$,
 $\pi_{l+3}\pi_{l+4}\dotsm\,\pi_n$ is a permutation of length $n-l-2$ with
 \[
 \des(\pi_{l+3}\pi_{l+4}\dotsm\,\pi_n)=d-1-\des(\pi_1 \pi_2 \cdots \pi_j)=d-k-1.
 \]
 Note that as a subset of $[n]$, there are $\binom{j-2}{u}\binom{n-j-1}{l-1-u}$
 possibilities of $\{\pi_1, \pi_2,\dotsm \pi_{l-1}\}$. Thus the number of
 such $\pi$s is
\[
\sum_{l=1}^{n-2}\sum_{k=0}^{l-1}\sum_{u=0}^{j-2}\binom{j-2}{u}\binom{n-j-1}{l-1-u}A(l, l-1-k, u+1)A(n-l-2, d-k-1).
\]
 \item
 $1nj$ is in $\pi$ and $\pi_1\neq1$. Suppose that
 $\pi_{n-l+1}=j$, $\des(\pi_{n-l+1} \pi_{n-l+2}\dotsm \pi_{l-1})=k-1$
 and there are $u$ letters in $\{\pi_{n-l+1}, \pi_{n-l+2},\dotsm \pi_{l-1}\}$
 which are less than $j$, where $1\leq k+1 \leq l\leq n-3$ and $0\leq u \leq j-2$.
 Similar to case 1, $\pi=\pi_1\pi_2\dotsm\pi_{n-l-2}$ is a permutation of length $n-l-2$ with $d-1-k$ descents and $j\pi_{n-l+2}\pi_{n-l+3}\dotsm\pi_n$ is a permutation of length $l$
 with $k-1$ descents and the $(u+1)$th largest letter as the first letter.
 Thus the number of  such $\pi$s is
\[
\sum_{l=1}^{n-3}\sum_{k=0}^{l-1}\sum_{u=0}^{j-2}\binom{j-2}{u}\binom{n-j-1}{l-1-u}V(l, k-1, u+1)A(n-l-2, d-k-1).
\]
\item
$1nj$ is in $\pi$ and $\pi_1=1$. So the $\pi_3=j$. Then $j\pi_4\pi_5\dotsm\pi_n$ is a permutation of length $n-2$ with $d-1$ descents and the $(j-1)$th largest letter as the first letter. The number of such $\pi$s is $V(n-2, d-1, j-1)$.

\end{itemize}

Combining the above three cases, we have
\begin{multline*}		E(n,d,j)=\sum_{l=1}^{n-2}\sum_{k=0}^{l-1}\sum_{u=0}^{j-2}\binom{j-2}{u}\binom{n-j-1}{l-1-u}V(l, l-1-k, u+1)A(n-l-2, d-k-1)\\
+\sum_{l=1}^{n-3}\sum_{k=0}^{l-1}\sum_{u=0}^{j-2}\binom{j-2}{u}\binom{n-j-1}{l-1-u}V(l, k-1, u+1)A(n-l-2, d-k-1)+V(n-2, d-1, j-1)\\
=\sum_{l=1}^{n-2}\sum_{k=0}^{l-1}\sum_{u=0}^{j-2}\binom{j-2}{u}\binom{n-j-1}{l-1-u}A(n-l-2, d-k-1)U(l,k,u+1)\\
-\sum_{k=0}^{n-2}\sum_{u=0}^{j-2}\binom{j-2}{u}\binom{n-j-1}{l-1-u}V(l, k-1, u+1)A(n-l-2, d-k-1)
+V(n-2, d-1, j-1).
\end{multline*}
It is easy to check that when $l=n-2$,
\[
\sum_{k=0}^{l}\sum_{u=0}^{j-2}\binom{j-2}{u}\binom{n-j-1}{l-1-u}V(l, k-1, u+1)A(n-l-2, d-k-1)= V(n-2, d-2, j-1),
\]
which complete the proof.
\end{proof}

\cref{recEndj} is translated into the language of generating functions as follows.
\begin{thm}\label{D2}
\[
E(t,x,y)=tx^2y[A(t,x+xy)+1]\hat{U}(t,x,y)+t(1-t)x^2yA(t,x,y),
\]
i.e.,
\[
E(t,x,y)=\dfrac{t(t-1)^2x^3y^2e^{(t-1)x}(e^{2(t-1)xy}+1)}
{(t-e^{(t-1)x(y+1)})^2}.
\]
\end{thm}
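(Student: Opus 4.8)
The plan is to turn the combinatorial recursion \eqref{Endj} of \cref{recEndj} into a generating-function identity by weighting. I would multiply every term of \eqref{Endj} by $\frac{t^{d}x^{n}y^{j}}{(n-j-1)!(j-2)!}$ and sum over all $n$, $d$ and $2\le j\le n-1$; by definition the left-hand side becomes $E(t,x,y)$. On the right the expression splits into the main triple convolution (the $l,k,u$-sum) and the two correction terms $V(n-2,d-1,j-1)-V(n-2,d-2,j-1)$. The conventions $A(n,d)=0$ and $A(n,d,j)=0$ let me extend every inner range to all meaningful indices, so that each convolution factorizes cleanly.

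For the main term the key is to break the binomials into reciprocal factorials via $\frac{1}{(j-2)!}\binom{j-2}{u}=\frac{1}{u!\,(j-2-u)!}$ and $\frac{1}{(n-j-1)!}\binom{n-j-1}{l-1-u}=\frac{1}{(l-1-u)!\,(n-j-l+u)!}$, and then read off three independent blocks. The reversed prefix of length $l$, contributing $U(l,k,u+1)$ with weight $\frac{t^{k}x^{l}y^{u+1}}{u!\,(l-1-u)!}$, assembles exactly into $\hat{U}(t,x,y)$, its $y$-exponent $u+1$ accounting for the $u$ letters below $j$ together with $j$ itself. The free tail of length $n-l-2$ is enumerated only through its descents by $A(n-l-2,d-k-1)$, so summing over the choice of which of its letters lie below $j$ (weight $xy$ each) or above $j$ (weight $x$ each) replaces its exponential variable $x$ by $x(1+y)$; by \eqref{F} this block is $A(t,x+xy)+1$. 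Finally the two distinguished letters $1$ (below $j$, weight $xy$) and $n$ (above $j$, weight $x$) give $x^{2}y$, while the single forced descent $n>1$ supplies the extra $t$; hence the main term equals $tx^{2}y\,[A(t,x+xy)+1]\,\hat{U}(t,x,y)$.

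For the correction terms I would use that $V$ is enumerated by the same series $A(t,x,y)$ (consistently with \eqref{undj}); the shifts $n\mapsto n+2$, $j\mapsto j+1$ and $d\mapsto d+1$, respectively $d\mapsto d+2$, turn $V(n-2,d-1,j-1)$ and $V(n-2,d-2,j-1)$ into $tx^{2}yA(t,x,y)$ and $t^{2}x^{2}yA(t,x,y)$, whose difference is $t(1-t)x^{2}yA(t,x,y)$. Combining the two parts gives the operator-free identity. Substituting the closed forms of \cref{yinliv2} and \cref{utxy2} and clearing the common denominator $(t-e^{(t-1)x(y+1)})^{2}$ then produces the stated closed form, the cross terms $\pm t\,e^{(t-1)xy}$ cancelling so that the numerator collapses to $t(t-1)^{2}x^{3}y^{2}e^{(t-1)x}(e^{2(t-1)xy}+1)$.

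The step I expect to be the main obstacle is the $y$-bookkeeping in the main term: one must verify that the global exponent of $y$, namely the value $j$, distributes correctly as $(u+1)+(j-2-u)+1$ across the three blocks, and in particular that the tail block yields $A(t,x+xy)+1$ rather than a first-letter-refined series, which hinges on recognizing that $A(n-l-2,d-k-1)$ sees only the relative order of the tail and therefore contributes the substitution $x\mapsto x(1+y)$ after summing over the small/large split of its letters. Keeping the descent shifts consistent with the forced ascent at $1$ and the forced descent at $n$, so that the $-1$ produces the prefactor $t$ and the discrepancy between the $-1$ and $-2$ shifts produces the factor $1-t$, is the other place where sign and index errors are easy to make.
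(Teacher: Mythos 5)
Your proposal is correct and takes essentially the same route as the paper: the paper likewise multiplies \eqref{Endj} by $\frac{t^{d}x^{n}y^{j}}{(j-2)!(n-j-1)!}$, factorizes the triple convolution into $\hat{U}(t,x,y)$ times $A(t,x+xy)+1$ via the binomial sum $\sum_{j}\binom{n-l-2}{j-2-u}y^{j}=(1+y)^{n-l-2}y^{u+2}$ (your below-$j$/above-$j$ factorial split is the identical computation), and turns the two $V$-terms into $t(1-t)x^{2}yA(t,x,y)$ by the same index shifts before substituting the closed forms. Your bookkeeping of the $y$-exponent as $(u+1)+(j-2-u)+1$ and the cancellation of the $\pm t e^{(t-1)xy}$ cross terms in the final simplification are both accurate, so nothing needs to be added.
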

\begin{proof}
Multiplying each term in \eqref{Endj} by  $\dfrac{t^dx^ny^j}{(j-2)!(n-j-1)!}$,
summing over all integers $n,j$ and $d$ such that $n\geq3$, $2\leq j\leq n-1$
and $0\leq d\leq n-1$,
we deduce that
\begin{align*}
 E(t,x,y)
 &= \sum_{n=3}^{\infty}\sum_{d=0}^{n-1}\sum_{j=2}^{n-1}\frac{E(n,d,j)t^dx^ny^j}{(j-2)!(n-j-1)!} \\
 &= \sum_{n=3}^{\infty}\sum_{d=0}^{n-1}\sum_{j=2}^{n-1}\sum_{l=1}^{n-2}\sum_{k=0}^{l-1}
 \sum_{u=0}^{j-2}\frac{\binom{j-2}{u}\binom{n-j-1}{l-1-u})t^dx^ny^j}{(j-2)!(n-j-1)!}A(n-l-2, d-k-1)U(l,k,u+1)\\
 &\quad + tx^2y\sum_{n=3}^{\infty}\sum_{d=0}^{n-1}\sum_{j=2}^{n-1}\frac{A(n-2, d-1,j-1)t^{d-1}x^{n-2}y^{j-1}}{(j-2)!(n-j-1)!} \\
 &\quad - t^2x^2y\sum_{n=3}^{\infty}\sum_{d=0}^{n-1}\sum_{j=2}^{n-1}\frac{A(n-2, d-2, j-1)t^{d-2}x^{n-2}y^{j-1}}{(j-2)!(n-j-1)!}\\
 &=tx^2\sum_{n=3}^{\infty}\sum_{d=0}^{n-1}\sum_{j=2}^{n-1}\sum_{l=1}^{n-2}\sum_{k=0}^{l-1}
 \sum_{u=0}^{j-2}\binom{n-l-2}{j-2-u}y^j \cdot \frac{A(n-l-2, d-k-1)x^{n-l-2}t^{d-k-1}}{(n-2-l)!}\\
 &\quad \cdot\frac{U(l,k,u+1)t^kx^l}{u!(l-1-u)!}+tx^2yA(t,x,y)-t^2x^2yA(t,x,y).
\end{align*}
Exchanging the order of $u$ and $j$ in the summation, we have
\begin{multline*}
\sum_{n=3}^{\infty}\sum_{d=0}^{n-1}\sum_{j=2}^{n-1}\sum_{l=1}^{n-2}\sum_{k=0}^{l-1}
 \sum_{u=0}^{j-2}\binom{n-l-2}{j-2-u}y^j \cdot \frac{A(n-l-2, d-k-1)x^{n-l-2}t^{d-k-1}}{(n-2-l)!}\\
  \cdot\frac{U(l,k,u+1)t^kx^l}{u!(l-1-u)!}
 =\sum_{n=3}^{\infty}\sum_{d=0}^{n-1}\sum_{u=0}^{n-3}\sum_{l=1}^{n-2}\sum_{k=0}^{l-1}
 \sum_{j=u+2}^{n-1}\binom{n-l-2}{j-2-u}y^j \\
  \cdot \frac{A(n-l-2, d-k-1)x^{n-l-2}t^{d-k-1}}{(n-2-l)!} \cdot\frac{U(l,k,u+1)t^kx^l}{u!(l-1-u)!}.
\end{multline*}
For fixed nonnegative integer $n,l$ and $u$ such that $n-2\geq l\geq u+1$ (then $n-u-3 \geq n-l-2$), we have
\[
\sum_{j=2}^{n-1}\binom{n-l-2}{j-2-u}y^j=y^{u+2}\sum_{i=-u}^{n-u-3}
\binom{n-l-2}{i}y^{i}=(1+y)^{n-l-2}y^{u+2}.
\]
Thus
\begin{align*}
 E(t,x,y)
 &=tx^2\sum_{n=3}^{\infty}\sum_{d=0}^{n-1}\sum_{u=0}^{n-3}\sum_{l=1}^{n-2}\sum_{k=0}^{l-1}
 \sum_{j=u+2}^{n-1}\binom{n-l-2}{j-2-u}y^j \cdot
 \frac{A(n-l-2, d-k-1)x^{n-l-2}t^{d-k-1}}{(n-2-l)!} \\
 &\quad \cdot\frac{U(l,k,u+1)t^kx^l}{u!(l-1-u)!}+tx^2yA(t,x,y)-t^2x^2yA(t,x,y).\\
 &=tx^2\sum_{n=3}^{\infty}\sum_{d=0}^{n-1}\sum_{u=0}^{n-3}\sum_{l=1}^{n-2}\sum_{k=0}^{l-1}
  (1+y)^{n-l-2}y^{u+2}\cdot \frac{A(n-l-2, d-k-1)x^{n-l-2}t^{d-k-1}}{(n-2-l)!} \\
 &\quad \cdot\frac{U(l,k,u+1)t^kx^l}{u!(l-1-u)!}+tx^2yA(t,x,y)-t^2x^2yA(t,x,y)\\
 &= tx^2y \sum_{n=3}^{\infty}\sum_{d=0}^{n-1}\sum_{u=0}^{n-3}\sum_{l=1}^{n-2}\sum_{k=0}^{l-1}  \dfrac{A(n-l-2, d-k-1)[x(1+y)]^{n-l-2}t^{d-k-1}}{(n-2-l)!} \\
 &\cdot\frac{U(l,k,u+1)t^kx^ly^{u+1}}{u!(l-1-u)!}+t(1-t)x^2yA(t,x,y)\\
 &=tx^2y \sum_{n,d}\frac{A(n, d)[x(1+y)]^{n}t^{d}}{n!} \cdot\sum_{l,k}\sum_{u=1}^{k}\frac{U(l,k,u)t^kx^ly^{u}}{(u-1)!(l-u)!} +t(1-t)x^2yA(t,x,y) \\
 &=tx^2y[A(t,x+xy)+1]\hat{U}(t,x,y)+t(1-t)x^2yA(t,x,y).
\end{align*}
 	Thus completing the proof.
 \end{proof}

\subsection{The generating function of $\{b(n,d,j)\}_{n,d,j}$}

In this subsection we will give a relation between $B(t,x,y),B(t,x)$ and $E(t,x,y)$
by the relations between their coefficients. We give a bijection proof adopting the idea of the reversal-concatenation map in~\cite{WZ20a}. The concept of lowest point in ~\cite{WZ20a} play a important role in the proof, which is defined to be the position $1\leq k\leq n$ of a permutation $\pi=\pi_1\pi_2\dotsm\pi_n$ satisfying
\[
h(\pi_1\pi_2\dotsm\pi_k)=min\{h(\pi_1\pi_2\dotsm\pi_i)|1\leq\,i\leq\,n\}.
\]
From the definition, it is easy to see that if $l$ is the minimal lowest point of
a permutation $\pi=\pi_1\pi_2\cdots\pi_n$, then $\pi_{l-1}\pi_{l-2}\cdots\pi_{1}$ and
$\pi_{l}\pi_{l+1}\cdots\pi_{n}$ are both ballot permutations (empty permutation is also ballot permutation). For example, the first lowest point of permutation 143265 is 4,
then 341 and 265 are ballot permutations. Similarly, if $l$ is the maximal lowest point of
a permutation $\pi=\pi_1\pi_2\cdots\pi_n$, then $\pi_{l}\pi_{l-1}\cdots\pi_{1}$ and
$\pi_{l+1}\pi_{l+2}\cdots\pi_{n}$ are ballot permutations.
\begin{thm}\label{recendj}
For all integers $n,d$ and $j$ such that $n\geq3, 0\leq d\leq n-1$
and $2\leq j\leq n-1$, we have
\begin{multline}\label{E+E} E(n,d,j)+E(n,d-1,j)=\sum_{l,k,u}\binom{j-2}{u}\binom{n-j-1}{n-l-3-u}b(l,k)b(n-l,\,d-l+k,\,u+2)\\
+\sum_{l,k,u}\binom{j-2}{u}\binom{n-j-1}{l-3-u}b(n-l,d-l+k)b(l,\,k,\,u+2).
\end{multline}
\end{thm}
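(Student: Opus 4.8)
The plan is to prove \eqref{E+E} bijectively, via the reversal--concatenation map built on the lowest point, exactly as the remarks preceding the statement suggest. The left-hand side counts, over the two descent values $d$ and $d-1$, all permutations of $[n]$ containing $1nj$ or $jn1$ as a factor, and I will show that each such permutation is cut, at a lowest point, into an ordered pair of ballot permutations in which the factor survives intact inside one of the two blocks.

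First I would analyse the height function near the factor. The step into $n$ is an ascent and the step out of $n$ is a descent, so, writing $H$ for the common height attained just after the first and the third letters of the factor, the three consecutive positions of the factor carry heights $H,H+1,H$. Hence the position of $n$ is never a lowest point, while the two outer letters sit at the same height $H$. From this I would deduce that neither the minimal nor the maximal lowest-point cut can fall strictly inside the factor, so that after cutting and reversing the prefix block the factor is preserved, becoming $(u+2)\cdot\max\cdot 1$ or $1\cdot\max\cdot(u+2)$ inside whichever block contains the global maximum $n$; here $u+2$ is the rank, within that block, of the neighbour of $n$ that was originally $j$. This identifies the factor-block contribution as $b(\cdot,\cdot,u+2)$ and the complementary block as a plain $b(\cdot,\cdot)$, and it produces the two sums according to whether the factor lands in the unreversed block (of length $n-l$, first sum) or in the reversed prefix block (of length $l$, second sum).

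Next I would pin down the correspondence and track the statistics. Each permutation $\pi$ with $d_\pi$ descents produces \emph{two} decompositions: one at the minimal lowest point, where the junction pair $\pi_{l-1}\pi_l$ is a descent, and one at the maximal lowest point, where it is an ascent (or a boundary block is empty). In either case write $l$ for the length of the reversed prefix block and $k$ for its number of descents, so that in its original orientation it carries $l-1-k$ descents. Combining this with the junction then shows that the complementary block has $d-l+k$ descents precisely when $d=d_\pi$ for the minimal cut and $d=d_\pi+1$ for the maximal cut, which is exactly the shifted parameter appearing in both sums. The binomials record the interleaving of the remaining letters: among the $j-2$ letters of $\{2,\dots,j-1\}$ one selects the $u$ that accompany the former $j$ into the factor-block, fixing its rank there as $u+2$, while the $n-1-j$ letters of $\{j+1,\dots,n-1\}$ are distributed so that each block attains its prescribed length, giving $\binom{j-2}{u}\binom{n-j-1}{n-l-3-u}$ and $\binom{j-2}{u}\binom{n-j-1}{l-3-u}$.

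Summing over the two cuts then reproduces the left-hand side: a fixed $\pi$ with $d_\pi$ descents is tallied once at parameter $d=d_\pi$ through its minimal cut and once at parameter $d=d_\pi+1$ through its maximal cut, so at a given parameter $d$ the right-hand side counts exactly the permutations with the factor and with $d$ or $d-1$ descents, namely $E(n,d,j)+E(n,d-1,j)$. The main obstacle I anticipate is upgrading this matching of counts to a genuine bijection. One must verify that the minimal and the maximal lowest-point cuts always deliver two \emph{distinct} valid pairs with the factor intact (the height analysis above is the crux here), and, for the inverse map, that concatenating two given ballot permutations along a chosen interleaving of value sets yields a permutation whose lowest point is forced to lie exactly at the reconstructed junction, with the junction a descent or an ascent according to whether one is inverting the minimal or the maximal cut. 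Establishing this last point, so that the two directions invert each other with no double counting, is the delicate step; by contrast, the rank and binomial bookkeeping is routine once the block structure is fixed.
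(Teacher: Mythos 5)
Your proposal is correct and is essentially the paper's own argument: the paper likewise applies the reversal--concatenation map at the minimal lowest point (descent junction, accounting for $E(n,d,j)$) and at the maximal lowest point (ascent junction, accounting for $E(n,d-1,j)$), splitting according to whether the factor lies in the reversed prefix or the suffix, with the same $b(l,k)\,b(n-l,d-l+k,u+2)$ bookkeeping and binomial interleaving. The details you flag as delicate --- the height analysis showing the cut never breaks the factor $1nj$/$jn1$, and the verification that the two cuts invert the concatenation without double counting --- are precisely what the paper compresses into ``it is not difficult to check that the map is a bijection,'' so your plan, if anything, is more explicit than the published proof.
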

\begin{proof}
Define
\[
\mathcal{E}(n,d,j)=\{\pi\in\mathcal{S}_n:\des(\pi)=d ~\text{and $\pi$
has $1nj$ or $jn1$ as a factor}\},
\]
\[
\mathcal{B}(n,d)=\{\pi\in\mathcal{S}_n:\des(\pi)=d ~\text{and $\pi$
is a ballot permutation}\},
\]
\[
\mathcal{B}_j(n,d)=\{\pi\in\mathcal{B}(n,d): \text{$\pi$
has $1nj$ or $jn1$ as a factor}\},
\]
\begin{multline*}
\mathcal{B}^1(n,d,j)
=\{(\rho,\tau)\colon\text{$\exists$ $l,k,u\geq0$ such that} \\ \rho\in\mathcal{B}(l,k),
\tau\in\mathcal{B}_{u+2}(n-l,d-l+k), ~\text{and}~ \rho\tau\in \mathcal S_{n}\},~\text{and}
\end{multline*}
\begin{multline*}
\mathcal{B}^2(n,d,j)
=\{(\rho,\tau)\colon\text{$\exists$ $l,k,u\geq0$ such that} \\ \rho\in\mathcal{B}_{u+2}(l,k),\tau\in\mathcal{B}(n-l,d-l+k),
 ~\text{and}~ \rho\tau\in \mathcal S_{n}\}.
\end{multline*}
Now we give a bijection between $\mathcal{E}(n,d,j)\cup \mathcal{E}(n,d-1,j)$ and
$\mathcal{B}^1(n,d,j)\cup\mathcal{B}^2(n,d,j)$.

For any permutation $\pi\in\mathcal{E}(n,d,j)$, assume that $l+1$ is the minimal lowest point. Let  $\rho=\pi_l\pi_{l-1}\dotsm\pi_1$ and $\pi(2)=\pi_{l+1}\pi_{l+2}\dotsm\pi_n$. Assume that $\des(\rho)=l-k-1$. Then $\pi_l<\pi_{l+1}$, and $\rho$ or $\tau$ has $1nj$ or $jn1$ as a factor:
\begin{itemize}
	\item
	if $\rho$ has the factor $1nj$ or $jn1$. Assume that there are $u+1$ numbers less than $j$ in $\rho$, then $(\rho,\tau)\in\mathcal{B}^2(n,d,j)$.
    \item
    if $\tau$ has the factor $1nj$ or $jn1$. Assume that there are $u+1$ numbers less than $j$ in $\tau$, then $(\rho,\tau)\in\mathcal{B}^1(n,d,j)$.
\end{itemize}

Similarly, for any permutation $\pi\in\mathcal{E}(n,d-1,j)$, assume that $l$ is the maximal lowest point. Let $\rho=\pi_l\pi_{l-1}\dotsm\pi_1$ and $\pi(2)=\pi_{l+1}\pi_{l+2}\dotsm\pi_n$. Assume that $\des(\rho)=l-k-1$. Then $(\rho,\tau)\in\mathcal{B}^1(n,d,j)\bigcup\mathcal{B}^2(n,d,j)$.

It is not difficult to check that the map
\begin{align*}
\phi\colon \mathcal{E}(n,d,j)\cup \mathcal{E}(n,d-1,j) &\to
\mathcal{B}^1(n,d,j)\bigcup\mathcal{B}^2(n,d,j)\\
\pi&\mapsto(\rho,\tau)
\end{align*}
is a bijection. Thus completing the proof.
\end{proof}

With \cref{recendj}, we obtain the following relation between $B(t,x,y),B(t,x)$ and $E(t,x,y)$.
\begin{thm}\label{D1} We have that
\begin{align} \label{Btxy}
B(t,x,y)B\brk2{\frac{1}{t},tx(1+y)}+B\brk2{\frac{1}{t},tx,y}B(t,x+xy)=(1+t)E(t,x,y).
\end{align}
\end{thm}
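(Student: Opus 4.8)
The plan is to turn the combinatorial identity~\eqref{E+E} of \cref{recendj} into an identity of generating functions by multiplying both sides by $\frac{t^{d}x^{n}y^{j}}{(n-j-1)!(j-2)!}$ and summing over all admissible $n,d,j$. On the left the term $E(n,d,j)$ reproduces $E(t,x,y)$ by definition, while the term $E(n,d-1,j)$, after the shift $d\mapsto d+1$ (legitimate since $E(n,-1,j)=0$, so no boundary term appears), produces $t\,E(t,x,y)$. Thus the left-hand side collapses to $(1+t)E(t,x,y)$, which is already the right-hand side of~\eqref{Btxy}; everything then rests on showing that the two sums on the right of~\eqref{E+E} assemble into the two products in~\eqref{Btxy}.

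For the first sum I would substitute $m=n-l$, $e=d-l+k$ and $p=u+2$, so that the position-carrying factor becomes $b(m,e,p)$ and the remaining factor is $b(l,k)$. The essential bookkeeping is to cancel the two binomial coefficients against the factorials in the weight: using $\frac{1}{(j-2)!}\binom{j-2}{p-2}=\frac{1}{(p-2)!(j-p)!}$ and $\frac{1}{(n-j-1)!}\binom{n-j-1}{m-p-1}=\frac{1}{(m-p-1)!(l+p-j)!}$, the denominators regroup as $\frac{1}{(p-2)!(m-p-1)!}\cdot\frac{1}{l!}\binom{l}{j-p}$, the first cluster being exactly the denominator attached to $b(m,e,p)$ in $B(t,x,y)$. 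Writing $q=j-p$ and splitting $y^{j}=y^{p}y^{q}$, the $q$-summation telescopes through $\sum_{q}\binom{l}{q}y^{q}=(1+y)^{l}$. The decisive point is the reversal hidden in the descent weight: since $t^{d}=t^{e}\,t^{l-k}$, the natural exponent $t^{e}$ stays with $b(m,e,p)$ but the \emph{reversed} exponent $t^{l-k}$ attaches to $b(l,k)$, whence $\sum_{l,k}\frac{b(l,k)\,t^{l-k}\,(x(1+y))^{l}}{l!}=B\brk1{\frac{1}{t},tx(1+y)}$ while the $b(m,e,p)$-part sums to $B(t,x,y)$. This identifies the first sum with $B(t,x,y)\,B\brk1{\frac{1}{t},tx(1+y)}$.

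The second sum is entirely parallel, with the roles interchanged: now $b(l,k,p)$ is the position-carrying factor and $b(n-l,e)$ is the plain one. The same regrouping yields the denominator $\frac{1}{(p-2)!(l-p-1)!}$ for $b(l,k,p)$ together with $\frac{1}{m!}\binom{m}{j-p}$ for the plain factor, and the $q$-sum now gives $(1+y)^{m}$. Again $t^{d}=t^{e}\,t^{l-k}$ forces the reversed exponent onto the position factor, so $\sum_{l,k,p}\frac{b(l,k,p)\,t^{l-k}x^{l}y^{p}}{(l-p-1)!(p-2)!}=B\brk1{\frac{1}{t},tx,y}$, while the plain factor assembles into $\sum_{m,e}\frac{b(m,e)\,t^{e}(x(1+y))^{m}}{m!}=B(t,x+xy)$. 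Hence the second sum equals $B\brk1{\frac{1}{t},tx,y}\,B(t,x+xy)$, and adding the two contributions gives~\eqref{Btxy}.

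I expect the main obstacle to be exactly this reversal bookkeeping. One must verify that after the substitution the total descent weight $t^{d}$ genuinely factors as $t^{e}\cdot t^{l-k}$, and then recognize the pairing of the reversed exponent $t^{l-k}$ with the rescaled argument $tx$ (or $tx(1+y)$) as the substitution $(t,x)\mapsto(1/t,tx)$ inside $B$. The second delicate point is the binomial-against-factorial regrouping that manufactures the factor $(1+y)^{l}$ (respectively $(1+y)^{m}$): an off-by-one in the range of $q=j-p$, or in matching $\binom{l}{j-p}$ against the nonvanishing range of the original binomials, would destroy the factorization, so I would check these index ranges against the support of the summand before collapsing the $q$-sum.
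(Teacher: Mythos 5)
Your proposal is correct and follows essentially the same route as the paper: multiplying \eqref{E+E} by $\frac{t^{d}x^{n}y^{j}}{(n-j-1)!(j-2)!}$, regrouping the binomials against the factorials so the $q=j-p$ sum collapses to $(1+y)^{l}$ or $(1+y)^{m}$ via the binomial theorem, and splitting $t^{d}=t^{e}t^{l-k}$ so that the factor indexed by $(l,k)$ absorbs the reversed weight as the substitution $(t,x)\mapsto(\frac{1}{t},tx)$ --- this is precisely the regrouping the paper displays in \eqref{222}. In fact your write-up carries out explicitly the summation step the paper compresses into the phrase ``techniques in generatingfunctionology as in the proof of \cref{D2}.''
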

\begin{proof}
Multiplying each term in \eqref{E+E} by $\dfrac{t^{d}x^{n}y^{j}}{(n-j-1)!(j-2)!}$,
we have
\begin{align}\label{222}
&\quad\frac{E(n,d,j)t^{d}x^{n}y^{j}}{(n-j-1)!(j-2)!}+t\cdot\frac{E(n,d-1,j)t^{d-1}x^{n}y^{j}}{(n-j-1)!(j-2)!}\\
&=\sum_{l,k,u}\dfrac{b(l,k)t^{-k}(tx)^ly^{j-u-2}}{l!}\cdot
\frac{b_{n-l,\,d-l+k}(1,u+2)t^{d-l+k}x^{n-l}y^{u+2}}{(n-l-3-u)!u!}\binom{l}{j-2-u}\\
&\quad+\sum_{l,k,u}\frac{b(n-l,d-l+k)t^{d-l+k}x^{n-l}y^{j-2-u}}{(n-l)!}
\cdot\frac{b_{l,k}(1,u+2)t^{-k}(xy)^{l}z^{u+2}}{(l-3-u)!(n-l!)}\binom{n-l}{j-2-u}.
\end{align}
Summing over \eqref{222} for all $n,d$ and $j$ such that $n\geq3, 0\leq d\leq n-1$
and $2\leq j\leq n-1$, the desired generating function can be obtained by using
techniques in generatingfunctionology as that is used in the proof of \cref{D2}.
\end{proof}

\subsection{The generating function of $\{p_{n,d}(1,j)\}_{n,d,j}$}
To calculate $P(t,x,y)$, we first prove the following formula for $p_{n,d}(1,j)$.
\begin{thm}
For $n\geq3$, $2\leq j \leq n-1$ and $0\leq d\leq n-1$, We have
\begin{multline} \label{relofpnd}
p_{n,d}(1,j)=\sum_{m_1+m_2+\lambda_1 n_1+\dotsm+\lambda_s n_s=n-3,d_0\leq \frac{m_1+m_2+2}{2},\atop d_0+\lambda_1 d_1+\dotsm+\lambda_s d_s=d,  n_i\,\text{is odd},\,m_1+m_2\,\text{is even}} \binom{j-2}{m_1}\binom{n-j-1}{m_2}
\dfrac{(n-3-m_1-m_2)!}{\Pi_{i=1}^{s}[(n_i!)^{\lambda_i}\lambda_i!]}\\
\cdot\prod_{i=1}^{s}[l(n_i,\,d_i)]^{\lambda_i} U(m_1+m_2+1,\,d_0-1,\,m_1+1).	
\end{multline}
\end{thm}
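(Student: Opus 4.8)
The plan is to prove \eqref{relofpnd} by a direct structural decomposition of each odd order permutation $\pi$ counted by $p_{n,d}(1,j)$ according to the unique cycle $c^{*}$ carrying the cyclic factor $1nj$. Since $n$ is the largest and $1$ the smallest letter, the presence of $1nj$ as a cyclic factor means precisely $\pi(1)=n$ and $\pi(n)=j$; in particular $1,n,j$ all lie in $c^{*}$, and $c^{*}$ has odd length. I would first isolate the data that determine $c^{*}$: the letters other than $1,n,j$ that it contains, split into $m_{1}$ letters smaller than $j$ (chosen among the $j-2$ small letters different from $1$) and $m_{2}$ letters larger than $j$ (chosen among the $n-j-1$ large letters different from $n$). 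This produces the factor $\binom{j-2}{m_{1}}\binom{n-j-1}{m_{2}}$ and forces the length of $c^{*}$ to equal $m_{1}+m_{2}+3$, which is odd exactly when $m_{1}+m_{2}$ is even, matching one of the summation constraints.

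The crux is to count the internal arrangements of $c^{*}$ with $\min(\cdes(c^{*}),\casc(c^{*}))=d_{0}$ in terms of the statistic $U$. Writing $c^{*}$ in its cyclic order $1,n,j,c_{1},\dots,c_{L-3}$ with $L=m_1+m_2+3$ (so that $c_{L-3}\to 1$ closes the cycle), I delete the two extreme letters $1$ and $n$ and read off the linear word $\sigma=j\,c_{1}\cdots c_{L-3}$ of length $m_{1}+m_{2}+1$, whose first letter $j$ has rank $m_{1}+1$ from below. Inspecting the cyclic transitions shows that $1\to n$ is the only cyclic ascent touching $\{1,n\}$, whereas $n\to j$ and $c_{L-3}\to 1$ are forced cyclic descents, so that
\[
\cdes(c^{*})=\des(\sigma)+2,\qquad \casc(c^{*})=\asc(\sigma)+1.
\]
Hence $\min(\cdes(c^{*}),\casc(c^{*}))=d_{0}$ occurs precisely when $\des(\sigma)=d_{0}-2$ (the case $\cdes\le\casc$) or $\des(\sigma)=(m_{1}+m_{2}+1)-d_{0}$ (the case $\casc<\cdes$). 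With $m=m_{1}+m_{2}+1$, summing these two disjoint families of words having first letter of rank $m_{1}+1$ gives $A(m,m-d_{0},m_{1}+1)+A(m,d_{0}-2,m_{1}+1)=U(m_{1}+m_{2}+1,\,d_{0}-1,\,m_{1}+1)$ by the defining identity \eqref{undj}, and the constraint $d_{0}\le (m_{1}+m_{2}+2)/2$ records that $d_{0}$ is genuinely the minimum.

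The remaining $N=n-3-m_{1}-m_{2}$ letters form the complement of the content of $c^{*}$ and must be assembled into cycles of odd length carrying the residual statistic $d-d_{0}$. Grouping these cycles by type, using $\lambda_{i}$ cycles of odd length $n_{i}$ each with min-statistic $d_{i}$, the number of such assemblies of the labeled $N$-set is the multinomial $N!/\prod_{i}[(n_{i}!)^{\lambda_{i}}\lambda_{i}!]$ multiplied by $\prod_{i}[l(n_{i},d_{i})]^{\lambda_{i}}$, where $l(n_{i},d_{i})$ counts the odd cycles on a fixed $n_{i}$-set with the prescribed min-statistic. Multiplying the content-choice factor, the $U$-factor, and this assembly factor, then summing over all admissible $m_{1},m_{2},d_{0}$ and cycle types subject to $m_{1}+m_{2}+\sum_{i}\lambda_{i}n_{i}=n-3$ and $d_{0}+\sum_{i}\lambda_{i}d_{i}=d$, yields \eqref{relofpnd}.

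I expect the main obstacle to be the bookkeeping around the distinguished cycle. One must verify that deleting $1$ and $n$ produces the two equalities $\cdes(c^{*})=\des(\sigma)+2$ and $\casc(c^{*})=\asc(\sigma)+1$ uniformly, including the degenerate short cycles where $\sigma$ is nearly empty, and that the two descent-families packaged inside $U$ are disjoint and together exhaust the arrangements with $\min(\cdes(c^{*}),\casc(c^{*}))=d_{0}$. This last point is exactly where the parity of $m_{1}+m_{2}+3$ enters: it rules out the coincidence $\cdes(c^{*})=\casc(c^{*})$ that would otherwise cause an over-count, and it is what makes the half-integer bound $d_{0}\le(m_{1}+m_{2}+2)/2$ the correct cut-off for the two $A$-terms.
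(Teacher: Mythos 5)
Your proposal is correct and follows essentially the same route as the paper: isolating the distinguished odd cycle carrying $1nj$, choosing its content via $\binom{j-2}{m_1}\binom{n-j-1}{m_2}$, reducing its internal count to $U(m_1+m_2+1,d_0-1,m_1+1)$ by deleting $1$ and $n$ to obtain a linear word with first letter of rank $m_1+1$, and assembling the remaining letters into odd cycles by type with the factor $(n-3-m_1-m_2)!\prod_i [l(n_i,d_i)]^{\lambda_i}/\prod_i[(n_i!)^{\lambda_i}\lambda_i!]$. Your explicit bookkeeping $\cdes(c^{*})=\des(\sigma)+2$, $\casc(c^{*})=\asc(\sigma)+1$ and the parity argument ruling out $\cdes(c^{*})=\casc(c^{*})$ correctly fill in details the paper states only tersely (``$j\pi_1\cdots\pi_{n-3}$ has $d-2$ or $n-d-2$ descents''), so no gap remains.
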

\begin{proof}
It is easy the see that, $p_{n,d}(1,j)$ is the ways of decomposition of $[n]$ satisfying
the following conditions:
\begin{itemize}
\item (i) $s,m_1,m_2,d_0,d_i,n_i,\lambda_i(1\leq i\leq s)$ are nonnegative integers such that $d_0\leq \frac{m_1+m_2+2}{2}$, $n_i$ is odd ($1\leq i\leq s$), $m_1+m_2$ is even and
    \[
    (m_1+m_2,d_0)+\sum_{i=1}^{s}\lambda_i(n_i,d_i)=(n-3,d).
    \]
\item (ii) $[n]$ is divided into $1+\sum_{i=1}^{s}\lambda_i$ odd order cycles, such that there are $\lambda_i$ cycles of length $n_i$ with $M(\cdot)=d_i$ ($1\leq i\leq s$) and the remaining cycle (denoted by $c$) has length $m_1+m_2+3$ with $M(c)=d_0$.
\item (iii) $c$ has $1nj$ as a cyclic factor and $m_1$ numbers belonging to $\{2,3,\dotsm,j-1\}$.
\end{itemize}

For odd integer $n$ and integer $0\leq d\leq \lfloor \frac{n-1}{2}\rfloor$, let $l(n,d)$ denote the number of cyclic permutations over $[n]$ of length $n$ with $M(\cdot)=d$.
When $n>1$, for any such cyclic permutation $(nc_1c_2\dotsm c_{n-1})$, the permutation $c_1c_2\dotsm c_{n-1}$ has length $n-1$ and $d-1$ or $n-d-1$ descents. Since $n$ is odd,
$d-1\neq n-d-1$. Noting that $A(n-1,d-1)=A(n-1,n-1-1-(d-1))=A(n-1,n-d-1)$, we have
\begin{equation*}
l(n,d)=\left\{
\begin{aligned}
&2A(n-1,\,d-1), \,\,\, &n>1,\,\,\, d\leq\frac{n-1}{2}, \\
&1,&n=1, \,\,\,d=0.
\end{aligned}
\right.
\end{equation*}
According to \cref{Wang and Zhao}, we have
\[
B(t,x)=\exp\brk3{x+2 \sum_{k\ge 1}\sum_{d\le k-1} A(2k,d)t^{d+1}\frac{x^{2k+1}}{(2k+1)!}}
= \exp\brk3{\sum_{n ~\text{is odd},d}l(n,d)\frac{t^{d}x^{n}}{n!}}.
\]

Similarly, assume that a odd order cyclic permutation $c=(1nj\pi_1 \pi_2\,\cdots\,\pi_{n-3})$ over $[n]$ with $M(\pi)=d$ has $m$ numbers belonging to $\{2,3,\dotsm,j-1\}$. Then the permutation
$j\pi_1 \pi_2\,\cdots\,\pi_{n-3}$ is over $\{2,3,\dotsm,n-1\}$ which have $d-2$ or $n-d-2$ descents. Then the number of such cyclic $c$ is
\[
A(n-2,\,d-2,\,j-1)+A(n-2,\,n-d-2,\,j-1)=U(n-2,d-1,j-1).
\]

Thus for fixed $s,m_1,m_2,d_0,d_i,n_i,\lambda_i(1\leq i\leq s)$
satisfying condition (i), the ways of decomposition of $[n]$ satisfying
condition (ii) and (iii) is
\begin{multline*}
\binom{j-2}{m_1}\binom{n-j-1}{m_2}
\dfrac{(n-3-m_1-m_2)!}{\Pi_{i=1}^{s}[(n_i!)^{\lambda_i}\lambda_i!]}
\prod_{i=1}^{s}[l(n_i,\,d_i)]^{\lambda_i}\\
\cdot U(m_1+m_2+1,\,d_0-1,\,m_1+1).	
\end{multline*}
Summing over the integers $s,m_1,m_2,d_0,d_i,n_i,\lambda_i(1\leq i\leq s)$
satisfying condition (i) and thus completing the proof.
\end{proof}

\begin{thm}\label{pndxyz}
We have
\begin{align}\label{ptxy}
P(t,x,y)=tx^2yB(t,x+xy)U(t,x,y).
\end{align}
\end{thm}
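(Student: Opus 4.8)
The plan is to convert the combinatorial formula \eqref{relofpnd} for $p_{n,d}(1,j)$ directly into generating-function language, exactly paralleling the computation carried out in the proof of \cref{D2}. I would multiply each term of \eqref{relofpnd} by $\dfrac{t^{d}x^{n}y^{j}}{(j-2)!(n-j-1)!}$ and sum over all admissible $n,d$ and $2\le j\le n-1$. The key observation is that the right-hand side of \eqref{relofpnd} factors as a product of three independent pieces: the binomial factors $\binom{j-2}{m_1}\binom{n-j-1}{m_2}$ governing the placement of the $m_1$ elements of $\{2,\dots,j-1\}$ and the $m_2$ elements of $\{j+1,\dots,n-1\}$; the product $\prod_{i=1}^{s}[l(n_i,d_i)]^{\lambda_i}$ together with its multinomial symmetry factor $(n-3-m_1-m_2)!/\prod_i[(n_i!)^{\lambda_i}\lambda_i!]$, which assembles into the exponential generating function $B(t,\cdot)$; and the factor $U(m_1+m_2+1,d_0-1,m_1+1)$, which will produce $U(t,x,y)$.

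First I would handle the $j$-summation, which is the analogue of the step ``Exchanging the order of $u$ and $j$'' in \cref{D2}. Writing $j=m_1+2$ as the shifted index and summing the two binomial coefficients against $y^{j}$ via Vandermonde-type collapse $\sum_{j}\binom{j-2}{m_1}\binom{n-j-1}{m_2}y^{j}=(1+y)^{\,m_2}y^{\,m_1+2}(\cdots)$, I expect the $y$-dependence to separate cleanly so that the variable $x(1+y)$ appears attached to the ``free'' odd cycles while $xy$ tracks the elements below $j$ inside the marked cycle. Concretely, the cluster $\prod_{i=1}^{s}[l(n_i,d_i)]^{\lambda_i}$ with its symmetry factor exponentiates, by the same identity already recorded in the previous proof, to $\exp\!\brk1{\sum_{n~\text{odd},d}l(n,d)\,t^{d}(x(1+y))^{n}/n!}=B(t,x+xy)$, since the $m_2$ elements above $j$ contribute a factor $(1+y)$ to $x$. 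The marked cycle $c$ of length $m_1+m_2+3$ with $M(c)=d_0$, once the binomial weights and the factor $y^{m_1+2}$ are absorbed, contributes $tx^{2}y\,U(t,x,y)$: the $tx^{2}y$ comes from the three forced letters $1,n,j$ and the descent/ascent bookkeeping, and $U(m_1+m_2+1,d_0-1,m_1+1)$ summed against the appropriate weights is precisely the defining series $U(t,x,y)$, with $y$ marking the first letter $m_1+1$ of the reversed inner permutation.

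The main obstacle will be bookkeeping the index shifts and verifying that the exponents of $t,x,y$ match on the nose — in particular confirming that the constraint $d_0\le (m_1+m_2+2)/2$ in \eqref{relofpnd} is exactly the restriction built into the definition of $U(t,x,y)$ (the sum over $d\le (n-1)/2$ for odd $n$), so that the marked-cycle contribution is $U(t,x,y)$ rather than $\hat U(t,x,y)$. I would check this by tracking the parity condition ``$m_1+m_2$ is even'' (so $m_1+m_2+1$ is odd, matching the ``$n$ is odd'' summation in $U$) and the descent bound together. Once the three factors are identified, the product structure of \eqref{relofpnd} immediately yields
\[
P(t,x,y)=tx^{2}y\,B(t,x+xy)\,U(t,x,y),
\]
completing the proof. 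The routine part is the generatingfunctionology manipulation, which is mechanical given the template of \cref{D2}; the delicate part is the correct alignment of the half-integer descent bound with the $U$-series restriction.
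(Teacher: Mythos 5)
Your proposal is correct and follows essentially the same route as the paper's proof: multiply \eqref{relofpnd} by $t^{d}x^{n}y^{j}/((j-2)!(n-j-1)!)$, collapse the binomial sum over $j$ into $(1+y)^{n-3-m_1-m_2}$, which attaches entirely to the free odd cycles so that the exponential formula $B(t,x)=\exp\brk1{\sum l(n,d)t^{d}x^{n}/n!}$ produces $B(t,x+xy)$, and identify the remaining sum over $m_1,m_2,d_0$ — with $m_1+m_2+1$ odd and $d_0-1\le (m_1+m_2)/2$, exactly the restriction defining $U$ — as $U(t,x,y)$, just as in \eqref{jieguo}. The one slip is your intermediate Vandermonde-type display: the collapse gives $(1+y)^{n-3-m_1-m_2}y^{m_1+2}$ with \emph{all} of the $(1+y)$ going to the free cycles (the $m_2$ marked-cycle letters above $j$ carry $x$ alone), not $(1+y)^{m_2}y^{m_1+2}$, but your subsequent bookkeeping is consistent with the correct version.
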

\begin{proof}
Multiplying each term in \eqref{relofpnd} by $\dfrac{t^{d}x^{n}y^{j}}{(j-2)!(n-j-1)!}$, we obtain
\begin{multline*}
\dfrac{p_{n,d}(1,j)t^{d}x^{n}y^{j}}{(j-2)!(n-j-1)!}
=tx^{2}y\sum_{m_1+m_2+\lambda_1 n_1+\dotsm+\lambda_s n_s=n-3,d_0\leq \frac{m_1+m_2+2}{2},\atop d_0+\lambda_1 d_1+\dotsm+\lambda_s d_s=d,  n_i\,\text{is odd},\,m_1+m_2\,\text{is even}} \prod_{i=1}^{s}[\dfrac{t^{d_i}x^{n_i}l(n_i,\,d_i)}{n_i!}]^{\lambda_i}\frac{1}{\lambda_i!}\\
\cdot\dfrac{U(m_1+m_2+1,d_0-1,m_1+1)t^{d_0-1}x^{m_1+m_2+1}y^{m_1+1}}{m_1!m_2!}
\binom{n-m_1-m_2-3}{j-m_1-2}y^{j-m_1-2},
\end{multline*}
Summing over all integers $n,d,j$ and noting that
\[
\sum_{j}\binom{n-3-m_1-m_2}{j-2-m_1}y^{j-2-m_1}=(1+z)^{n-3-m_1-m_2}
=\prod_{i=1}^{s}(1+y)^{\lambda_in_i},
\]
we have
\begin{equation}\label{jieguo}
\begin{split}
\quad&\quad \sum_{n,d,j}\dfrac{t^{d}x^{n}y^{j}p_{n,d}(1,j)}{(j-2)!(n-j-1)!}\\
&=tx^{2}y\sum_{n,d}\sum_{m_1+m_2+\lambda_1 n_1+\dotsm+\lambda_s n_s=n-3,d_0\leq \frac{m_1+m_2+2}{2}, \atop d_0+\lambda_1 d_1+\dotsm+\lambda_s d_s=d,  n_i\,\text{is odd},\,m_1+m_2\,\text{is even}} \prod_{i=1}^{s}[\dfrac{t^{d_i}x^{n_i}(1+y)^{n_i}l(n_i,\,d_i)}{n_i!}]^{\lambda_i}\frac{1}{\lambda_i!}\\
&\quad\cdot\dfrac{U(m_1+m_2+1,d_0-1,m_1+1)t^{d_0-1}x^{m_1+m_2+1}y^{m_1+1}}{m_1!m_2!} \\
&=tx^{2}yB(t,x(1+y))\sum_{m_1+m_2 ~\text{is even}}\sum_{d_0\leq \frac{m_1+m_2+2}{2}}
U(m_1+m_2+1,d_0-1,m_1+1)\\
&\quad \cdot\frac{t^{d_0-1}x^{m_1+m_2+1}y^{m_1+1}}{m_1!\,m_2!}\\
&=tx^{2}yB(t,x(1+y))\sum_{n ~\text{is odd},j}\sum_{d\leq\frac{n-1}{2}}\frac{U(n,d,j)t^{d}x^{n}y^{j}}{(j-1)!(n-j)!}\\
&=tx^{2}yB(t,x(1+y))U(t,x,y).
\end{split}
\end{equation}
Thus completing the proof.
\end{proof}

\section{Proof of \cref{thm:WZ}}
Now we are in a position to prove \cref{thm:WZ}.
\begin{proof}
The conclusion is equivalent to $B(t,x,y)=2P(t,x,y)$.
Since $B(t,x,y)$ is uniquely determined by \eqref{Btxy}, we only need to prove
\[
(1+t)E(t,x,y)= 2B\brk2{\frac{1}{t},tx(1+y)}P(t,x,y)+2B(t,x+xy)P\brk2{\frac{1}{t},tx,y}.
\]
According to \eqref{ptxy}, we just need to verify the following equation
\begin{multline} \label{final}
(1+t)E(t,x,y) = 2tx^2yB\brk2{\frac{1}{t},tx(1+y)}B\brk2{t,x+xy)}U(t,x,y)\\
+\frac{2(tx)^2y}{t}B(t,x+xy)B\brk2{\frac{1}{t},tx(1+y)}
U\brk2{\frac{1}{t},tx,y}.
\end{multline}
According to \cite[Theorem 3.7]{WZ20a}, we have
\[
B(t,x)B\brk2{\frac{1}{t},tx}=1+(1+t)A(t,x).
\]
Then
\[
B(t,x+xy)B\brk2{\frac{1}{t},tx(1+y)}=1+(1+t)A(t,x+xy),
\]
\eqref{final} is equivalent to
\[
(1+t)E(t,x,y)=2\,tx^2y\brk2{1+(1+t)A(t,x+xy)}\brk3{U(t,x,y)+U\brk2{\frac{1}{t},tx,y}}.
\]
According to \cref{utxy3}, we only need to prove
\begin{equation*}\label{final1}
(1+t)E(t,x,y)=tx^2y\brk2{1+(1+t)A(t,x+xy)}(\hat{U}(t,x,y)-\hat{U}(t,-x,y)).
\end{equation*}

Plugging the formulas of $A(t,x), \hat{U}(t,x,y)$ and $E(t,x,y)$ into \eqref{final1}, one can complete the proof.
\end{proof}	

With \cref{thm:WZ}, we can prove the following theorem.
\begin{thm}\label{RS} We have
\[
P(t,x,y,z)=\frac{2y}{1-yz}\brk2{P(t,x,z)-P\brk1{t,xyz,\frac{1}{y}}}.
\]
\end{thm}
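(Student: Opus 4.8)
The plan is to collapse the four-variable sum defining $P(t,x,y,z)$ onto the three-variable series $P(t,x,y)$ by invoking the Toeplitz property of the array $\{p_{n,d}(i,j)\}$ recorded in \cite{WZ20}. That property says the array is constant along diagonals, i.e. $p_{n,d}(i,j)=p_{n,d}(i+1,j+1)$ whenever both sides are defined; iterating down to first coordinate $1$ yields
\[
p_{n,d}(i,j)=p_{n,d}(1,\,j-i+1)\qquad\text{for all }1\le i<j\le n-1.
\]
Substituting this into the definition of $P(t,x,y,z)$ replaces every $p_{n,d}(i,j)$ by a coefficient $p_{n,d}(1,\,j-i+1)$ of the kind already appearing in $P(t,x,y)$, which is the whole point of the reduction.

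Next I would change the summation variables. Writing $m=j-i+1$ (so $m\ge 2$) and $k=i$, one checks that $j-i-1=m-2$ and $n-j+i-2=n-m-1$, so the two factorials in the denominator become exactly $(m-2)!\,(n-m-1)!$, the ones occurring in $P(t,x,y)$. The monomial factor is $y^{i}z^{j}=(yz)^{k}z^{m-1}$, and for fixed $n,d,m$ the index $k=i$ runs over $1\le k\le n-m$. This gives
\[
P(t,x,y,z)=\sum_{n,d}\sum_{m\ge 2}\frac{2\,p_{n,d}(1,m)\,t^{d}x^{n}z^{m-1}}{(m-2)!\,(n-m-1)!}\sum_{k=1}^{n-m}(yz)^{k}.
\]
The inner sum is a finite geometric series, $\sum_{k=1}^{n-m}(yz)^{k}=\bigl(yz-(yz)^{n-m+1}\bigr)/(1-yz)$, so that $z^{m-1}\sum_{k}(yz)^{k}=\bigl(yz^{m}-y^{\,n-m+1}z^{n}\bigr)/(1-yz)$.

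Finally I would reassemble the two resulting pieces. Summed over $n,d,m$, the contribution of the term $yz^{m}$ is exactly $\frac{2y}{1-yz}P(t,x,z)$. For the term $-y^{\,n-m+1}z^{n}$, observe that $y^{\,n-m+1}z^{n}=y\,(yz)^{n}y^{-m}$ while $(xyz)^{n}(1/y)^{m}=x^{n}y^{\,n-m}z^{n}$, so this contribution is precisely $-\frac{2y}{1-yz}P\bigl(t,xyz,\tfrac1y\bigr)$; adding the two yields the claimed identity. The one point needing care is the formal-power-series bookkeeping around $1/(1-yz)$: since each inner sum over $k$ is finite, the left-hand side is a genuine power series, and I would read $1/(1-yz)$ on the right as $\sum_{l\ge 0}(yz)^{l}$. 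I would also note that in the admissible range $2\le m\le n-1$ one has $n-m\ge 1$, so $P(t,xyz,1/y)$ involves only nonnegative powers of $y$ and the substitution $y\mapsto 1/y$ generates no Laurent tails; this guarantees that $P(t,x,z)-P(t,xyz,1/y)$ and its product with $2y/(1-yz)$ are well-defined series. Modulo these checks the manipulation is routine, so the main obstacle is really just pinning down the Toeplitz property in the sharp form $p_{n,d}(i,j)=p_{n,d}(1,\,j-i+1)$ that drives the collapse.
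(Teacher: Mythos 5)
Your proposal is correct and follows essentially the same route as the paper: both replace $p_{n,d}(i,j)$ by $p_{n,d}(1,\,j-i+1)$ via the Toeplitz property from \cite{WZ20}, reindex with $u=j-i+1$ (your $m$), sum the finite geometric series in $i$ to produce the factor $\bigl(yz^{u}-y^{\,n-u+1}z^{n}\bigr)/(1-yz)$, and identify the two pieces as $\frac{2y}{1-yz}P(t,x,z)$ and $-\frac{2y}{1-yz}P\bigl(t,xyz,\tfrac{1}{y}\bigr)$. Your added checks — reading $1/(1-yz)$ as $\sum_{l\ge 0}(yz)^{l}$ and noting $n-m\ge 1$ so that the substitution $y\mapsto 1/y$ produces no negative powers — are sound bookkeeping that the paper leaves implicit, not a different argument.
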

\begin{proof}
  \begin{align*}
  P(t,x,y,z) & =\sum_{n,d}\sum_{1\leq i<j\leq n-1}\dfrac{2p_{n,d}(i,j)
  t^{d}x^{n}y^{i}z^{j}}{(j-i-1)!(n-j+i-2)!} \,(~\text{let} \,u=j-i+1)\\
  &=\sum_{n,d}\sum_{u=2}^{n-1}\sum_{i=1}^{n-u}\dfrac{2p_{n,d}(1,u)
  t^{d}x^{n}y^{i}z^{u+i-1}}{(u-2)!(n-u-1)!} \\
  &=\frac{2y}{1-yz} \sum_{n,d}\sum_{u=2}^{n-1}\dfrac{p_{n,d}(1,u)
  (t^{d}x^{n}z^{u}-t^{d}(xyz)^ny^{-u})}{(u-2)!(n-u-1)!} \\
  &=\frac{2y}{1-yz}\brk2{P(t,x,z)-P\brk1{t,xyz,\frac{1}{y}}}.
  \end{align*}
\end{proof}

\end{document}